\newtheorem{theorem}{Theorem}[section]
\newtheorem{lemma}[theorem]{Lemma}
\newtheorem{corollary}[theorem]{Corollary}
\newtheorem{proposition}[theorem]{Proposition}
\theoremstyle{definition}
\newtheorem{definition}[theorem]{Definition}
\theoremstyle{remark}
\newtheorem{ownremark}[theorem]{Remark} 
\numberwithin{equation}{section}
\newcommand{\ignore}[1]{}
\newcommand{\xup}{\ensuremath{\mathcal X}_{u,p} }
\newcommand{\xjup}{\ensuremath{\mathcal X}^{(j)}_{u,p} }
\newcommand{\beq}{\begin{eqnarray}}
\newcommand{\eeq}{\end{eqnarray}}
\newcommand{\beqq}{\begin{eqnarray*}}
\newcommand{\eeqq}{\end{eqnarray*}}
\newcommand{\hra}{\hookrightarrow}
\newcommand{\bit}{\begin{itemize}}
\newcommand{\eit}{\end{itemize}}
\newcommand{\bli}{\begin{list}{}{\labelwidth6mm\leftmargin8mm}}
\newcommand{\eli}{\end{list}}
\newcommand{\bpr}{\begin{proof}}
\newcommand{\epr}{\end{proof}}
\newcommand{\ds}{\displaystyle}
\newcommand{\R}{{\mathbb R}}
\newcommand{\real}{\R}
\newcommand{\Rn}{{\mathbb R}^{d}} 
\newcommand{\N}{\ensuremath{\mathbb N}}
\newcommand{\No}{\N_{0}}
\newcommand{\nat}{\N}
\newcommand{\no}{\No}
\newcommand{\Z}{\mathbb Z} 
\newcommand{\Zn}{\Z^{d}}
\newcommand{\C}{{\mathbb C}}
\newcommand{\ve}{\ensuremath\varepsilon}
\newcommand{\ls}{\lesssim}
\newcommand{\dint}{\;\mathrm{d}}
\newcommand{\id}{\ensuremath{\operatorname{id}}}
\newcommand{\whole}[1]{\ensuremath\left\lfloor #1 \right\rfloor}
\newcommand{\M}{\ensuremath{{\cal M}_{u,p}}}  
\newcommand{\Me}{\ensuremath{{\cal M}_{u_1,p_1}}}  
\newcommand{\Mz}{\ensuremath{{\cal M}_{u_2,p_2}}}  
\newcommand{\MB}{\ensuremath{{\cal N}^{s}_{u,p,q}}}  
\newcommand{\bt}{{B}_{p,q}^{s,\tau}}
\newcommand{\ft}{{F}_{p,q}^{s,\tau}}
\newcommand{\MF}{\ensuremath{{\cal E}^{s}_{u,p,q}}}  
\newcommand{\mb}{\ensuremath{n^{s}_{u,p,q}}}  
\newcommand{\mbe}{\ensuremath{n^{s_1}_{u_1,p_1,q_1}}}  
\newcommand{\mbz}{\ensuremath{n^{s_2}_{u_2,p_2,q_2}}}  
\newcommand{\mmb}{\ensuremath{m^{2^{jd}}_{u,p}}} 
\newcommand{\mmbet}{\ensuremath{{m}^{2^{jd}}_{u_1,p_1}}}  
\newcommand{\mmbzt}{\ensuremath{{m}^{2^{jd}}_{u_2,p_2}}}  
\newcommand{\ms}{\ensuremath{{ m}_{u,p}}}
\newcommand{\mse}{\ensuremath{{ m}_{u_1,p_1}}}
\newcommand{\msz}{\ensuremath{{ m}_{u_2,p_2}}}
\title{Morrey Sequence Spaces: Pitt's Theorem and compact embeddings}
\author{Dorothee D. Haroske, \\  
\emph{Institute of Mathematics, University of Rostock, 18057 Rostock, Germany} \\ 
Leszek Skrzypczak\thanks{The author was  supported by  National Science Center, Poland, Grant No. 2014/15/B/ST1/00164.}\\
\emph{Faculty of Mathematics and Computer Science, Adam  Mickiewicz University,}\\
\emph{ ul. Umultowska  87, 61-614 Pozna\'n, Poland}}
\date{}
\begin{document}
\maketitle

\begin{abstract}
Morrey (function) spaces and, in particular, smoothness spaces of Besov-Morrey or Triebel-Lizorkin-Morrey type enjoyed a lot of interest recently. Here we turn our attention to Morrey sequence spaces $m_{u,p}=m_{u,p}(\mathbb{Z}^d)$, $0<p\leq u<\infty$, which have yet been considered almost nowhere. They are defined as natural generalisations of the classical $\ell_p$ spaces. 
We consider some basic features, embedding properties, the pre-dual, a corresponding version of Pitt's compactness theorem, and can further  characterise the compactness of embeddings of related finite-dimensional spaces.
\end{abstract}

%






\section*{Introduction}
Morrey (function) spaces and, in particular, smoothness spaces of Besov-Morrey or Triebel-Lizorkin-Morrey type were studied in recent years quite intensively and systematically. Decomposition methods like atomic or wavelet characterisations require suitably adapted sequence spaces. This has been done to some extent already. We are interested in related {\em sequence spaces of Morrey type}, but first we briefly review some basic facts about the much more prominent {\em function spaces of Morrey type}. 

Originally, Morrey spaces were introduced in \cite{Mor38}, when studying solutions of second order quasi-linear elliptic equations in the framework of Lebesgue spaces.  They can be understood as a complement (generalisation) of the Lebesgue spaces $L_p(\Rn)$. In particular, the Morrey space $\M$, $0<p\leq u<\infty$, is defined as the collection of all  complex-valued Lebesgue measurable functions on $\Rn$ such that 
\begin{equation}\label{morrey-func}
\|f|\mathcal{M}_{u,p}(\Rn)\|=\sup_{x\in \Rn, R>0} R^{d(\frac1u-\frac1p)} \left(\int_{B(x,R)}|f(y)|^p\dint y\right)^{\frac 1p}<\infty,
\end{equation}
where $B(x,R)=\{y\in\Rn: |x-y|<R\}$ are the usual balls centred at $x\in\Rn$ with radius $R>0$. Obviously, $\mathcal{M}_{p,p}(\Rn)=L_p(\Rn)$, and $\M(\Rn)=\{0\}$ if $p>u$. Moreover, $\mathcal{M}_{\infty,p}(\Rn)=L_\infty(\Rn)$ such that the usual assumption is $p\le u<\infty$. As can be seen from the definition, Morrey spaces describe the local behaviour of the $L_p$ norm, which makes them useful when describing the local behaviour of solutions of nonlinear partial differential equations, cf.  \cite{KY94,LeR07,LeR12,LeR13,Maz03,Maz03b,Tay92}.  Furthermore,  applications in  harmonic analysis and potential analysis can be found in the papers  \cite{AX04,AX11,AX12}. For more information we refer to the books \cite{Ad15} and \cite{SYY10} and, in particular, to the fine surveys \cite{s011,s011a} by Sickel.

As for the smoothness spaces of Morrey type, aside of  Besov-Morrey spaces $\MB(\Rn)$ in \cite{KY94,Maz03,Maz03b}, and their counterparts Triebel-Lizorkin-Morrey spaces $\MF(\Rn)$, cf. \cite{TX}, their atomic and wavelet characterisations were already described in the papers \cite{Saw2,Saw1,ST2,ST1,MR-1}, and we simplified the appearing sequence spaces $\mb$ in  \cite{HaSk-bm1} to some extent. There are further related approaches to  Besov-type spaces $\bt(\Rn)$ and Triebel-Lizorkin-type spaces $\ft(\Rn)$, cf. \cite{SYY10} with forerunners in \cite{ElBaraka1,ElBaraka2, ElBaraka3,yy1,yy2}. Triebel provided a third approach, so-called local and hybrid spaces, in \cite{t13,t14}, but they coincide with appropriately chosen spaces of type $\bt(\Rn)$ or $\ft(\Rn)$, cf. \cite{ysy2}.

Recently, based on some discussion at the conference  
`Banach Spaces and Operator Theory with Applications' in Pozna\'n in July 2017 we found that Morrey sequence spaces $m_{u,p}=m_{u,p}(\mathbb{Z}^d)$, $0<p\leq u<\infty$, have been considered almost nowhere. As to the best of our knowledge there is only the paper \cite{GuKiSch} so far which is devoted to this subject. They are defined as natural generalisations of $\ell_p=\ell_p(\mathbb{Z}^d)$ via 
\begin{align*}
m_{u,p} = \bigg\{\lambda=\{\lambda_k\}_{k\in \mathbb{Z}^d} &\subset \mathbb{C}: 
 \\
 \|\lambda| m_{u,p} \| = & \sup_{j\in\mathbb{N}_0; m\in \mathbb{Z}^d} |Q_{-j,m}|^{\frac{1}{u}-\frac{1}{p}} \Big(\sum_{k:\, Q_{0,k}\subset Q_{-j,m}} \!\!\!|\lambda_k|^p\Big)^\frac{1}{p} < \infty \bigg\},
\end{align*}
where $Q_{i,m}$ are dyadic cubes of side length $2^{-i}$, $i\in\mathbb{Z}$, $m\in\mathbb{Z}^d$. Clearly, $m_{p,p}= \ell_p$.

We consider some basic features in Section~\ref{sec-1} and present our main embedding result, Theorem~\ref{cont}, in Section~\ref{sec-2}, which reads as follows: Let    $0<p_1\leq u_1<\infty$ and  $0<p_2\leq u_2<\infty$.  Then the embedding 
\[
	 \mse\hookrightarrow \msz
\]
is continuous  if, and only if, the following conditions hold
\[
u_1 \le u_2\qquad \text{and}\qquad \frac{p_2}{u_2} \le\frac{p_1}{u_1}.
\]
The embedding is never compact.

In Section~\ref{sec-3} we can describe the pre-dual $\xup$ of $\ms$, $1\leq p<u<\infty$, which is a separable Banach space, unlike $\ms$.

Dealing with   the closure  $\ms^{00}$ of finite sequences in $\ms$, we obtain a counterpart to Pitt's theorem \cite{Pitt} in our setting as follows, see Theorem~\ref{pitt-ms} below: Let $1<p<u<\infty$ and $1\le q<\infty$. Then any bounded linear operator
\[T: \ms^{00} \to \ell_q \]
is compact. The above sequence spaces are not rearrangement invariant. Further  information about the Pitt theorem  in rearrangement invariant setting  can be found in \cite{DMR} and \cite{LM}.    

Finally, we can further  characterise the compactness of embeddings of related finite-dimensional spaces and receive for the asymptotic behaviour of the dyadic entropy numbers of such a finite-dimensional embedding that
\[
e_k(\id_j:\mmbet \rightarrow \mmbzt) \sim  2^{-k2^{-jd}} \ 2^{jd\left(\frac{1}{u_2}-\frac{1}{u_1}\right)},
\]
where $j\in\nat$, $0<p_i\leq u_i<\infty$, $i=1,2$, and $k\in \no$ with $k\gtrsim 2^{jd}$.

\section{Morrey sequence spaces}\label{sec-1}
\subsection{Preliminaries}
First we fix some notation. By $\nat$ we denote the \emph{set of natural numbers},
by $\no$ the set $\nat \cup \{0\}$,  and by $\Zn$ the \emph{set of all lattice points
in $\Rn$ having integer components}.
For $a\in\real$, let   $\whole{a}:=\max\{k\in\Z: k\leq a\}$ and $a_+:=\max\{a,0\}$.
All unimportant positive constants will be denoted by $C$, occasionally with
subscripts. By the notation $A \lesssim B$, we mean that there exists a positive constant $C$ such that
 $A \le C \,B$, whereas  the symbol $A \sim B$ stands for $A \ls B \ls A$.
We denote by $|\cdot|$ the Lebesgue measure when applied to measurable subsets of $\Rn$.

Given two (quasi-)Banach spaces $X$ and $Y$, we write $X\hookrightarrow Y$
if $X\subset Y$ and the natural embedding of $X$ into $Y$ is continuous.

For $0<p<\infty$ we denote by $\ell_p=\ell_p(\Zn)$, 
\begin{align*} 
\ell_p(\Zn) = \Big\{\lambda=  \{\lambda_k\}_{k\in \Zn}\subset \C:           \|\lambda| \ell_p \| = \Big(\sum_{k\in\Zn} |\lambda_k|^p\Big)^\frac{1}{p} < \infty \Big\}, 
\end{align*} 
complemented by
\begin{align*} 
\ell_\infty(\Zn) = \Big\{\lambda=  \{\lambda_k\}_{k\in \Zn}\subset \C:       \|\lambda| \ell_\infty \| = \sup_{k\in\Zn} |\lambda_k| < \infty \Big\}. 
\end{align*}
If $\{\lambda^*_\nu\}_{\nu\in\nat}$ stands for a non-increasing rearrangement of a sequence $\lambda=\{\lambda_k\}_{k\in\Zn}\in\ell_u(\Zn)$, $0<u<\infty$, then 
\begin{align*} 
\ell_{u,\infty}(\Zn) = \Big\{\lambda=  \{\lambda_k\}_{k\in \Zn}\subset \C:       \|\lambda| \ell_{u,\infty} \| = \sup_{\nu\in\nat} \nu^{1/u} \lambda^*_\nu < \infty \Big\} 
\end{align*}
denote the Lorentz sequence spaces, as usual. Finally, we adopt the custom to denote by $c=c(\Zn)$, $c_0=c_0(\Zn)$, and $c_{00}=c_{00}(\Zn)$ the corresponding subspaces of $\ell_\infty(\Zn)$ of convergent, null and finite sequences, respectively, that is
\begin{align*}
c & = \{ \lambda= \{\lambda_k\}_{k\in \Zn}\in\ell_\infty: \ \exists\ \mu\in\C: |\lambda_k-\mu|\xrightarrow[|k|\to\infty]{} 0\Big\},\\
c_0 & = \{ \lambda= \{\lambda_k\}_{k\in \Zn}\in\ell_\infty: \ |\lambda_k|\xrightarrow[|k|\to\infty]{} 0\Big\},\\
c_{00} & = \{ \lambda= \{\lambda_k\}_{k\in \Zn}\in\ell_\infty: \ \exists\ r_0\in\no: \lambda_k=0\quad\text{for}\quad |k|>r_0\Big\}.
\end{align*}
As we mostly deal with sequence spaces on $\Zn$ we shall often omit it from their notation, for convenience.

\subsection{The concept}
Let $Q_{j,m}$,  $j\in \Z $, $m\in \Zn$, denote the usual dyadic cubes in $\Rn$ i.e. $Q_{0,0}=[0,1)^d$ and $Q_{j,m}= 2^{-j}(m+Q_{0,0})$.  

\begin{definition} \label{D-ms}
Let $0<p\le u < \infty$. We define $\ms=\ms(\Zn)$ by
\begin{align*} 
\ms(\Zn) = \Big\{\lambda= & \{\lambda_k\}_{k\in \Zn}\subset \C:  \\
             &              \|\lambda| \ms \| = \sup_{j\in\no; m\in \Zn} |Q_{-j,m}|^{\frac{1}{u}-\frac{1}{p}} \Big(\sum_{k:\, Q_{0,k}\subset Q_{-j,m}} \!\!\!|\lambda_k|^p\Big)^\frac{1}{p} < \infty \Big\}. 
\end{align*} 
\end{definition}

\begin{ownremark}
Obviously, Definition~\ref{D-ms} gives the discrete counterpart of $\M(\Rn)$ in view of \eqref{morrey-func}. In \cite{GuKiSch} the corresponding one-dimensional counterpart was introduced and studied.
\end{ownremark}

\begin{proposition}\label{Prop-1-ms}
Let $0<p\le u < \infty$. 
\bli
\item[{\bfseries\upshape (i)}]
$\ms$ is a (quasi-) Banach space. 
\item[{\bfseries\upshape (ii)}]
If $u=p$, then $m_{u,u} = \ell_u$, if $u<p$, then $\ms=\{0\}. $ If $p_1\leq p_2\leq u$, then $m_{u,p_2} \hookrightarrow m_{u,p_1}$.
 \item[{\bfseries\upshape (iii)}]
For any $p$ and $u$  we have $\ms\hookrightarrow \ell_\infty$.
\item[{\bfseries\upshape (iv)}]
If $p<u$, then $\ell_{u,\infty}\hookrightarrow \ms$.
\item[{\bfseries\upshape (v)}]
If $p<u$, then $\ms$ and $c_0$, $c$ are incomparable, that is, $\ms\not\subset c_0$, $\ms\not\subset c$, $c_0\not\subset \ms$, $c\not\subset \ms$.
\eli
\end{proposition}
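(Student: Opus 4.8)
The plan is to treat the five assertions essentially independently, disposing of (i)--(iii) by direct inspection and reserving the real work for the embedding (iv) and the incomparability (v). The common pivot is the elementary observation that choosing $j=0$ in the defining supremum reduces each summand to a single unit cube $Q_{0,m}$ with Morrey factor $1$, so $\|\lambda\,|\,\ms\|\ge\sup_m|\lambda_m|=\|\lambda\,|\,\ell_\infty\|$. This already yields (iii) with embedding constant $1$ and supplies the positive-definiteness needed for (i); the remaining (quasi-)norm axioms hold because for each fixed $(j,m)$ the inner expression is a constant times the $\ell_p$-norm of a finite vector, so the (quasi-)triangle inequality is inherited cube-by-cube and survives the supremum, and completeness follows by extracting a coordinatewise limit via the $\ell_\infty$-bound and then passing to the limit inside each finite sum before taking the supremum. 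For (ii), write $N=|Q_{-j,m}|=2^{jd}$ for the number of unit cubes in $Q_{-j,m}$, so the Morrey factor is $N^{1/u-1/p}$: when $u=p$ this factor is $1$ and taking cubes $Q_{-j,m}$ that exhaust $\Zn$ gives $\|\lambda\,|\,m_{u,u}\|=\|\lambda\,|\,\ell_u\|$; when formally $u<p$ the factor tends to $\infty$ and testing one nonzero coordinate forces $\ms=\{0\}$; and the nesting $m_{u,p_2}\hookrightarrow m_{u,p_1}$ for $p_1\le p_2$ follows from H\"older on each cube, $\big(\sum_k|\lambda_k|^{p_1}\big)^{1/p_1}\le N^{1/p_1-1/p_2}\big(\sum_k|\lambda_k|^{p_2}\big)^{1/p_2}$, together with the exponent identity $N^{1/u-1/p_1}N^{1/p_1-1/p_2}=N^{1/u-1/p_2}$.

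The substance is (iv). I would fix a cube with $N=2^{jd}$ unit cells and let $\{\lambda^*_\nu\}$ be the non-increasing rearrangement, so that $\lambda^*_\nu\le A\,\nu^{-1/u}$ with $A=\|\lambda\,|\,\ell_{u,\infty}\|$. Any sum of $N$ of the numbers $|\lambda_k|^p$ is bounded by the $N$ largest terms, hence $\sum_{k:\,Q_{0,k}\subset Q_{-j,m}}|\lambda_k|^p\le\sum_{\nu=1}^{N}(\lambda^*_\nu)^p\le A^p\sum_{\nu=1}^{N}\nu^{-p/u}\ls A^p\,N^{1-p/u}$, where convergence of the partial sum uses exactly $p<u$. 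Multiplying by the Morrey factor cancels the powers of $N$: $N^{1/u-1/p}\big(A^p N^{1-p/u}\big)^{1/p}=A\cdot N^{1/u-1/p+1/p-1/u}=A$, and the supremum over $(j,m)$ yields $\|\lambda\,|\,\ms\|\ls\|\lambda\,|\,\ell_{u,\infty}\|$.

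Finally, (v) requires two tailored examples, and I would first record that $\ell_{u,\infty}\subset c_0$ (its rearrangement tends to $0$), so (iv) cannot be recycled here. For $\ms\not\subset c_0$ and $\ms\not\subset c$ I would take the indicator $\lambda$ of a lacunary set such as $\{2^n e_1:n\in\no\}$: any cube of side $2^j$ meets this set in at most $\ls j$ points, so the inner sum is $\ls j$ while the Morrey factor $2^{jd(1/u-1/p)}$ decays exponentially in $j$, giving $\lambda\in\ms$; yet $\lambda$ equals $1$ arbitrarily far out, so $\lambda\notin c$ and a fortiori $\lambda\notin c_0$. For the reverse inclusions I would take $\lambda_k=(1+|k|)^{-\alpha}$ with $0<\alpha<d/u$: then $\lambda\in c_0\subset c$, but testing the cubes $Q_{-j,0}$ with $\sum_{|k|\ls2^j}|k|^{-\alpha p}\sim2^{j(d-\alpha p)}$ (valid since $\alpha p<d$) produces $N^{1/u-1/p}(\cdots)^{1/p}\sim2^{j(d/u-\alpha)}\to\infty$, so $\lambda\notin\ms$, giving $c_0\not\subset\ms$ and $c\not\subset\ms$.

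The main obstacle is the bookkeeping in (v): one must verify both that the lacunary indicator genuinely lies in $\ms$ (the count of lacunary points in a dyadic cube weighed against the exponentially small Morrey factor) and that the polynomial sequence genuinely escapes it (the admissible window $0<\alpha<d/u$ together with the lattice-sum asymptotics), and it is precisely here that the hypothesis $p<u$ is indispensable, since for $p=u$ one has $\ms=\ell_u\subset c_0$ and the incomparability collapses. The estimate in (iv) is routine once the Lorentz bound $\lambda^*_\nu\le A\nu^{-1/u}$ is invoked, the only delicate point being the convergence of $\sum\nu^{-p/u}$, which again forces $p<u$.
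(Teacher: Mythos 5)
Your proof is correct and follows essentially the same route as the paper's: the $j=0$ reduction to $\ell_\infty$ for (iii) and (i), H\"older's inequality on each cube for (ii), the rearrangement estimate $\sum_{\nu=1}^{N}\nu^{-p/u}\lesssim N^{1-p/u}$ for (iv), and in (v) the same two test sequences (a lacunary indicator lying in $\ms$ but not in $c$, and a sequence $|k|^{-\alpha}$ with $0<\alpha<d/u$ lying in $c_0$ but not in $\ms$), with your single second example efficiently covering both $c_0\not\subset\ms$ and $c\not\subset\ms$ where the paper additionally tests the constant sequence. One small slip in (ii): dyadic cubes never cross the coordinate hyperplanes, so no sequence of cubes $Q_{-j,m}$ exhausts $\Zn$ and the supremum only recovers the maximum of the $\ell_u$-norms over the $2^d$ coordinate orthants; hence $\|\lambda|m_{u,u}\|$ and $\|\lambda|\ell_u\|$ are equivalent (within a factor $2^{d/u}$) rather than equal, which is all that the identity $m_{u,u}=\ell_u$ requires.
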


\begin{proof}
Part (i) is standard, the completeness can be shown similar to the (one-dimensional) counterpart in \cite{GuKiSch}.\\
The first two assertions in (ii) are obvious, the monotonicity in $p$ is a matter of H\"older's inequality.
Concerning (iii), clearly for any $m\in\Zn$,
\begin{align*}
|\lambda_m|= & |Q_{0,m}|^{\frac1u-\frac1p} \left(\sum_{k:Q_{0,k}\subset Q_{0,m}} \!\!\!|\lambda_k|^p\right)^{\frac1p}\\
\leq & \sup_{j\in\no; m\in \Zn} |Q_{-j,m}|^{\frac{1}{u}-\frac{1}{p}} \Big(\sum_{k:\, Q_{0,k}\subset Q_{-j,m}}\!\!\! |\lambda_k|^p\Big)^\frac{1}{p}
=  \|\lambda|\ms\|,
\end{align*}
such that finally, taking the supremum over all $m\in\Zn$,
\[\|\lambda|\ell_\infty\| \leq \|\lambda|\ms\| .\]

We prove (iv). 
 Let $\{\lambda^*_\nu\}_{\nu\in\nat}$ be a non-increasing rearrangement of a sequence $\lambda=\{\lambda_k\}_{k\in\Zn}$. Then for any cube $Q_{-j,m}$ we have  
  \begin{align*}
\Big( \sum_{k:Q_{0,k}\subset Q_{-j,m}}|\lambda_k|^p\Big)^{1/p} &\le \Big( \sum_{\nu=1}^{2^{jd}}|\lambda^*_\nu|^p\Big)^{1/p}
\le \sup_{r\in\nat} r^{1/u} \lambda^*_r \, \Big( \sum_{\nu=1}^{2^{jd}}\nu^{-p/u}\Big)^{1/p}\\
&\le C  \|\lambda|\ell_{u,\infty}\|\, |Q_{-j,m}|^{\frac{1}{p}- \frac{1}{u}} ,
\end{align*}
since $p<u$ and $|Q_{-j,m}|=2^{jd}$. Hence $\|\lambda|\ms\|\leq C\|\lambda|\ell_{u,\infty}\|$.\\
It remains to deal with (v). Consider first the constant sequence $\lambda^1=\{1\}_{k\in\Zn}\in c$. Then 
\[
\|\lambda^1|\ms\| = 
\sup_{j\in\no; m\in \Zn} |Q_{-j,m}|^{\frac{1}{u}-\frac{1}{p}} |Q_{-j,m}|^{\frac1p} = 
\sup_{j\in\no} 2^{jd/u} = \infty,
\]
which disproves $c\subset \ms$ (and simultaneously strengthens (iii) by $\ms\subsetneq \ell_\infty$). A slight modification disproves $c_0\subset\ms$: choose $\ve$ such that $0<\ve<\frac{d}{u}$, and 
consider $\tilde{\lambda}= \{\tilde{\lambda}_k\}_{k\in\Zn}$ given by $\tilde{\lambda}_k=|k|^{-\ve}$. Then $\tilde{\lambda}\in c_0$. On the other hand,
\[
\|\tilde{\lambda}|\ms\| \geq c\ 
\sup_{j\in\no} |Q_{-j,0}|^{\frac{1}{u}-\frac{1}{p}} 2^{-j\ve} |Q_{-j,0}|^{\frac1p} = 
c \sup_{j\in\no} 2^{j(d/u-\ve)} = \infty,
\]
which gives $c_0\not\subset\ms$. \\
Now consider a special sequence $\lambda = \{\lambda_k\}_{k\in\Zn}$ which looks as follows,
\[
\lambda_k = \begin{cases} 1, & \text{if}\ k=(2^r, 0, \dots, 0)\ \text{for some}\  r\in\nat, \\ 0, & \text{otherwise}.
\end{cases}
\]
Obviously $\lambda\not\in c$, in particular, $\lambda\not\in c_0$. Now, by construction,
\[
\|\lambda|\ms\|\leq c\ \sup_{j\in\no} |Q_{-j,0}|^{\frac1u-\frac1p} \Big(\sum_{k:Q_{0,k}\subset Q_{-j,0}} \!\!\!|\lambda_k|^p\Big)^{\frac1p} \leq c'\ \sup_{r\in\nat} 2^{rd(\frac1u-\frac1p)} r^{\frac1p} < \infty.
\]
So the subspaces $c_0$, $c$ and $\ms$ of $\ell_\infty$ are incomparable in the above sense.
\end{proof}

\begin{ownremark}
As in case of the function spaces $\M(\Rn)$ one might complement the Definition~\ref{D-ms} in case of $0<p\leq u=\infty$ by
\begin{align*} 
m_{\infty,p}(\Zn) = \Big\{\lambda=&\{\lambda_k\}_{k\in \Zn}\subset \C: \\              & \|\lambda| m_{\infty,p} \| = \sup_{j\in\no; m\in \Zn} |Q_{-j,m}|^{-\frac{1}{p}} \Big(\sum_{k: Q_{0,k}\subset Q_{-j,m}} \!\!\!|\lambda_k|^p\Big)^\frac{1}{p} < \infty \Big\}, 
\end{align*} 
where in case of $p=u=\infty$ the latter sum has to be replaced by the supremum, as usual. Using this definition we can show that $m_{\infty, p} = \ell_\infty$. This is obvious for $p=u=\infty$, so let us assume $p<u=\infty$. 
In view of Proposition~\ref{Prop-1-ms}(iii) it remains to show that $\ell_\infty\hookrightarrow m_{\infty,p}$ if $p<u=\infty$. Let $\lambda \in \ell_\infty$.
Thus for any $j\in\no$ and $m\in\Zn$,
\[
|Q_{-j,m}|^{-\frac{1}{p}}
\Big(\sum_{k:Q_{0,k}\subset Q_{-j,m}}\!\!\! |\lambda_k|^p\Big)^\frac{1}{p} \leq \
|Q_{-j,m}|^{-\frac{1}{p}} \|\lambda|\ell_\infty\|\ |Q_{-j,m}|^{\frac1p} = \|\lambda|\ell_\infty\|,
\]
which results in $\|\lambda|m_{\infty,p}\|\leq \|\lambda|\ell_\infty\|$.
\end{ownremark}

\begin{ownremark}\label{rem-u=infty}
Let $Q$ denote an arbitrary closed cube in $\R^d$ with $|Q|\ge 1$. We put 
\begin{align}
\|\lambda | \ms \|_{(1)} = & \sup_{Q: |Q|\ge 1} |Q|^{\frac{1}{u}-\frac{1}{p}} \Big(\sum_{k: Q_{0,k}\subset Q} \!\!|\lambda_k|^p\Big)^\frac{1}{p}, \\ 
\| \lambda | \ms \|_{(2)} = & \sup_{Q: |Q|\ge 1} |Q|^{\frac{1}{u}-\frac{1}{p}} \Big(\sum_{k: Q_{0,k}\cap Q \not= \emptyset} \!\!|\lambda_k|^p\Big)^\frac{1}{p} .
\end{align}   
Then $\|\lambda | \ms \|_{(1)}$ and $\|\lambda | \ms \|_{(2)}$ are equivalent norms in $\ms$. It is obvious that  $\|\lambda | \ms \| \le \|\lambda | \ms \|_{(1)} \le \|\lambda | \ms \|_{(2)}$.  On the other hand let  $Q$ be a cube centred at $x_0$ with size $r\ge 1$. We take  a cube $\widetilde{Q}$ centred at $x_0$ with size $r+2$. If we choose $j$ in such a way that $2^{j-1}< r+2 \le 2^{j}$, then there are at most $2^d$  dyadic cubes $Q_{-j,m_i}$ such   that $\overline{Q}_{-j,m_i}$ cover $\widetilde{Q}$.  Then $|Q_{-j,m}|\le 2^d |\widetilde{Q}| \le 2^{2d} |Q|$ and in consequence. 
\[ \|\lambda | \ms \|_{(2)}\le  c_{d,u,p}\|\lambda | \ms \|_{(1)} \le C_{d,u,p}\|\lambda | \ms \| . \]      
\end{ownremark}

\begin{proposition}
Let  $N\in\nat$, $0<p_j\leq u_j<\infty$, $j=1, \dots, N$, and $\lambda^{(j)}=\{\lambda_k^{(j)}\}_{k\in\Zn}\in m_{u_j, p_j}$, $j=1, \dots, N$. 
Then $\lambda^{(1)}\cdots \lambda^{(N)}=\{\lambda_k^{(1)}\cdots \lambda^{(N)}_k\}_{k\in\Zn}\in\ms$, with
\[
\| \lambda^{(1)}\cdots \lambda^{(N)}| \ms\| \leq \|\lambda^{(1)}|\mse\| \cdots \|\lambda^{(N)}|m_{u_N, p_N}\|,
\]
\text{where}
\[ \frac1u= \sum_{j=1}^N \frac{1}{u_j}\quad\text{and}\quad  \frac1p\geq \sum_{j=1}^N \frac{1}{p_j}.
\]
\end{proposition}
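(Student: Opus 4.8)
The plan is to reduce everything to a single generalised Hölder inequality applied on each fixed dyadic cube, exploiting that the two exponent relations are exactly what is needed to factorise both the summation and the volume weight. First I would dispose of the slack in the condition on $p$. Setting $\frac{1}{\tilde p}=\sum_{j=1}^N \frac{1}{p_j}$, the hypotheses give $\frac1{\tilde p}\ge\sum_{j=1}^N\frac1{u_j}=\frac1u$, so $\tilde p\le u$, while $\frac1p\ge\frac1{\tilde p}$ forces $p\le\tilde p\le u$. By the monotonicity in the integrability parameter, Proposition~\ref{Prop-1-ms}(ii), we have $m_{u,\tilde p}\hra\ms$ with embedding norm $1$, so it suffices to prove the claimed estimate with $p$ replaced by $\tilde p$, i.e.\ in the equality case $\frac1{\tilde p}=\sum_j\frac1{p_j}$. (In passing this confirms $p\le u$, so that the target space $\ms$ is nontrivial.)

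For the equality case I would fix a dyadic cube $Q_{-\nu,m}$, $\nu\in\no$, $m\in\Zn$, abbreviate the finite index set $A=\{k:\,Q_{0,k}\subset Q_{-\nu,m}\}$, and apply the generalised Hölder inequality on $A$ with exponents $p_1,\dots,p_N$:
\[
\Big(\sum_{k\in A}\prod_{j=1}^N|\lambda_k^{(j)}|^{\tilde p}\Big)^{1/\tilde p}
\le\prod_{j=1}^N\Big(\sum_{k\in A}|\lambda_k^{(j)}|^{p_j}\Big)^{1/p_j}.
\]
The decisive second ingredient is that the weight splits as $|Q_{-\nu,m}|^{\frac1u-\frac1{\tilde p}}=\prod_{j=1}^N|Q_{-\nu,m}|^{\frac1{u_j}-\frac1{p_j}}$, because $\frac1u-\frac1{\tilde p}=\sum_j(\frac1{u_j}-\frac1{p_j})$. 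Multiplying the two displays and distributing the weight factors into the product, each factor becomes $|Q_{-\nu,m}|^{\frac1{u_j}-\frac1{p_j}}\big(\sum_{k\in A}|\lambda_k^{(j)}|^{p_j}\big)^{1/p_j}$, which is bounded by $\|\lambda^{(j)}|m_{u_j,p_j}\|$ uniformly in $\nu,m$. Taking the supremum over all $\nu,m$ then yields the asserted product bound.

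The only point needing care — and the main technical obstacle — is the generalised Hölder inequality in the full quasi-Banach range, where some $p_j$ may be below $1$. I would establish it by induction on $N$ from the two-factor case; the latter, after raising to the power $s$ and substituting $A_k=|a_k|^s$, $B_k=|b_k|^s$, reduces to ordinary Hölder with the conjugate exponents $s_1/s,\,s_2/s\ge1$ (where $\frac1s=\frac1{s_1}+\frac1{s_2}$), which is classical. Since all estimates above are uniform in the cube, the supremum passes through without difficulty, and no rearrangement or completeness argument is required beyond what is already recorded in Proposition~\ref{Prop-1-ms}.
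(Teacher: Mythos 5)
Your proposal is correct and follows essentially the same route as the paper: the paper's proof consists precisely of noting that $\frac1u=\sum_j\frac1{u_j}\le\sum_j\frac1{p_j}\le\frac1p$ makes $\ms$ well-defined and then invoking iterated H\"older's inequality, which is exactly your cube-by-cube generalised H\"older argument (with the slack in $p$ absorbed via the monotonicity embedding, itself an instance of H\"older). You have merely written out the details — the reduction to the equality case, the splitting of the volume weight, and the validity of H\"older for exponents below $1$ — that the paper leaves implicit.
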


\begin{proof}
Obviously $\ 0<\frac1u= \sum_{j=1}^N \frac{1}{u_j}\leq \sum_{j=1}^N \frac{1}{p_j}\leq \frac1p$ such that $\ms$ is well-defined for $0<p\leq u<\infty$. The rest is iterated application of H\"older's inequality.
\end{proof}

\begin{corollary}
Let $0< p\leq u<\infty$, $\lambda\in\ms$, and $0<r<\infty$. Then $|\lambda|^r=\{|\lambda_k|^r\}_{k\in\Zn} \in m_{u/r, p/r}$ with
\[
\left\| \, |\lambda|^r\ | m_{\frac{u}{r}, \frac{p}{r}}\right\| = \| \lambda| \ms\|^r.
\]
\end{corollary}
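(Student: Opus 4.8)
The plan is to reduce the claim to a direct computation from the definition of the Morrey norm, since the assertion is an exact identity rather than a mere norm equivalence. First I would check that the target space is admissible: because $0<p\le u<\infty$ and $r>0$, dividing the chain of inequalities through by $r$ preserves it, so $0<\frac{p}{r}\le\frac{u}{r}<\infty$ and $m_{u/r,p/r}$ is indeed well-defined by Definition~\ref{D-ms}.

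Next I would write out $\bigl\||\lambda|^r \mid m_{u/r,p/r}\bigr\|$ straight from the definition, inserting the parameters $\frac{u}{r}$ and $\frac{p}{r}$ in place of $u$ and $p$, and carry out the two elementary simplifications on which the identity rests. The exponent attached to the cube measure becomes $\frac{r}{u}-\frac{r}{p}=r\bigl(\frac1u-\frac1p\bigr)$, while the summand transforms as $\bigl(|\lambda_k|^r\bigr)^{p/r}=|\lambda_k|^p$ and the outer power turns into $\frac{r}{p}$. Collecting these, the quantity under the supremum is precisely the $r$-th power of the corresponding summand for $\lambda$, i.e.
\[
|Q_{-j,m}|^{r(\frac1u-\frac1p)}\Bigl(\sum_{k:\,Q_{0,k}\subset Q_{-j,m}}|\lambda_k|^p\Bigr)^{r/p}=\Bigl[|Q_{-j,m}|^{\frac1u-\frac1p}\Bigl(\sum_{k:\,Q_{0,k}\subset Q_{-j,m}}|\lambda_k|^p\Bigr)^{1/p}\Bigr]^r .
\]

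Finally I would pull the exponent $r$ through the supremum. Since $t\mapsto t^r$ is continuous and strictly increasing on $[0,\infty)$, it commutes with suprema of non-negative quantities, so $\sup_{j,m}(\cdots)^r=\bigl(\sup_{j,m}\cdots\bigr)^r$, which yields $\bigl\||\lambda|^r \mid m_{u/r,p/r}\bigr\|=\|\lambda|\ms\|^r$; in particular, finiteness of the right-hand side (guaranteed by $\lambda\in\ms$) gives $|\lambda|^r\in m_{u/r,p/r}$. I do not anticipate a genuine obstacle here—the only point requiring a word of care is that the claim is an \emph{equality}, so one must explicitly invoke the monotonicity of $t\mapsto t^r$ to interchange the power with the supremum exactly, rather than settling for the one-sided estimate that a cruder argument would produce.
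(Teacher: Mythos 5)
Your proof is correct and is exactly the argument the paper intends: the paper's own proof is the one-line remark that the identity ``follows by the definition immediately,'' and your computation (rewriting the exponents, noting $(|\lambda_k|^r)^{p/r}=|\lambda_k|^p$, and commuting the increasing map $t\mapsto t^r$ with the supremum) simply spells out that definition-chase in full. No gap, and no difference in approach beyond the level of detail.
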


\begin{proof}
This follows by the definition immediately.
\end{proof}

\begin{proposition}
Let $0<p<u\leq\infty$. Then $\ms$ is non-separable.
\end{proposition}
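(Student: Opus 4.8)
The plan is to establish non-separability by producing an uncountable family $\{\lambda^A\}_{A\subseteq\nat}$ of elements of $\ms$ that are pairwise separated by a fixed positive distance. I would invoke the standard principle: if a (quasi-)metric space contains an uncountable subset whose points are mutually at distance at least some $\delta>0$, then no countable set can be dense, because the pairwise disjoint balls of radius $\delta/(2C)$ (with $C$ the quasi-triangle constant) centred at these points must each contain a distinct element of any dense set.

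First I would reuse the sparse sequence from the proof of Proposition~\ref{Prop-1-ms}(v). For every subset $A\subseteq\nat$ define $\lambda^A=\{\lambda^A_k\}_{k\in\Zn}$ by $\lambda^A_k=1$ if $k=(2^r,0,\dots,0)$ for some $r\in A$, and $\lambda^A_k=0$ otherwise. Each $\lambda^A$ is dominated coordinatewise in modulus by the full sparse sequence $\lambda$ treated in (v); since the defining quasi-norm of $\ms$ is monotone with respect to the $|\lambda_k|$, the bound
\[
\|\lambda^A|\ms\| \leq \|\lambda|\ms\| \leq c\,\sup_{r\in\nat} 2^{rd(\frac1u-\frac1p)} r^{\frac1p} < \infty
\]
holds uniformly in $A$, where finiteness of the supremum follows from $p<u$, which makes the exponent $\frac1u-\frac1p$ negative (read with the convention $\frac1u=0$, the same computation covers the endpoint $u=\infty$). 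Hence $\lambda^A\in\ms$ for every $A\subseteq\nat$.

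Next I would verify the separation. If $A\neq B$, choose $r$ lying in exactly one of the two sets; then $\lambda^A-\lambda^B$ equals $\pm1$ at the coordinate $k=(2^r,0,\dots,0)$, so by Proposition~\ref{Prop-1-ms}(iii), which yields $\|\cdot|\ell_\infty\|\le\|\cdot|\ms\|$,
\[
\|\lambda^A-\lambda^B|\ms\| \;\geq\; \|\lambda^A-\lambda^B|\ell_\infty\| \;\geq\; 1 .
\]
Since the power set of $\nat$ is uncountable, $\{\lambda^A\}_{A\subseteq\nat}$ is an uncountable, $1$-separated subset of $\ms$, and the principle recalled above gives non-separability.

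The argument is essentially routine; the only points requiring care are the uniformity of the norm bound across the uncountable family (secured by the lattice/monotonicity property of the quasi-norm) and the intrusion of the quasi-norm constant $C$ in the separation principle, which merely shrinks the radius of the disjoint balls without affecting the conclusion. The endpoint $u=\infty$ could alternatively be dispatched via the identity $m_{\infty,p}=\ell_\infty$ from the preceding remark together with the classical non-separability of $\ell_\infty$.
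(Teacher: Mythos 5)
Your proof is correct, but it is a genuinely different construction from the paper's. The paper fixes a sequence of \emph{disjoint dyadic cubes} $Q_{-j_\ell,m_\ell}$ of strictly increasing size and considers all sequences taking the values $\pm 2^{-j_\ell d/u}$ on the $\ell$-th cube and $0$ elsewhere; membership in $\ms$ then requires a case analysis over how an arbitrary test cube meets this family, and the stated separation of two distinct members, $2^{1-j_{\ell_0}d/u}$, is obtained from a single differing coordinate and hence \emph{depends on} $\ell_0$ (it degenerates as $\ell_0\to\infty$, so to invoke the standard separation principle one should, e.g., take the signs constant on each cube, in which case the difference restricted to the cube $Q_{-j_{\ell_0},m_{\ell_0}}$ has $\ms$-norm at least $2$). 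Your family of $0$--$1$ sequences indexed by subsets $A\subseteq\nat$ of the lacunary set $\{(2^r,0,\dots,0):r\in\nat\}$ avoids this delicacy entirely: the norm bound is uniform in $A$ by coordinatewise monotonicity of the quasi-norm together with the computation already done for Proposition~\ref{Prop-1-ms}(v), and the separation constant is uniformly $1$ via the embedding $\ms\hookrightarrow\ell_\infty$ of Proposition~\ref{Prop-1-ms}(iii), so the ``uncountable $\delta$-separated set'' principle (correctly adjusted for the quasi-triangle constant when $p<1$) applies verbatim. What the paper's construction buys is flexibility of scale: the same family of scaled $\pm$-sequences is the natural discrete analogue of the Rosenthal--Triebel argument for Morrey function spaces and records how the norm interacts with cubes of all sizes; what your construction buys is brevity and a clean, uniform separation. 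Your treatment of the endpoint $u=\infty$ (either by the convention $\frac1u=0$ or by $m_{\infty,p}=\ell_\infty$) matches the paper's, which also reduces that case to the classical non-separability of $\ell_\infty$.
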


\begin{proof}
If $u=\infty$, then $m_{\infty,p}=\ell_\infty$ in view of Remark~\ref{rem-u=infty} and the result is well-known. So assume $0<p<u<\infty$ now. We adapt the arguments of \cite[Prop. 3.7]{RoTri2} appropriately. We consider a sequence of disjoint dyadic cubes $\{Q_{-j_\ell, m_\ell}\}_{\ell\in\nat}$, where $m_\ell\in\Zn$ and $j_\ell\in\no$ with $j_1<j_2< \cdots$. Let the sequence $\lambda=\{\lambda_k\}_{k\in\Zn}$ be given by
\begin{equation}\label{ms-13}
\lambda_k = \begin{cases} \pm 2^{-j_\ell \frac{d}{u}} & \text{if}\quad Q_{0,k}\subset Q_{-j_\ell,m_\ell}, \\ 0 & \text{otherwise}.
\end{cases}
\end{equation}
Then $\lambda\in\ms$, as can be seen as follows. Let $Q_{-j,m}$ be a dyadic cube.  If $Q_{-j,m}$ has empty intersection with  $\bigcup_\ell Q_{-j_\ell, m_\ell}$, then it is nothing to estimate. If the intersection is not empty, then either $Q_{-j,m}$ is contained  in exactly one cube $Q_{-j_\ell, m_\ell}$ or it contains  finitely many cubes  $Q_{-j_\ell, m_\ell}$. 

In the first case, if  $Q_{-j,m}\subset Q_{-j_\ell, m_\ell}$, then  $j\le j_\ell$.       
Therefore
\begin{align*}
\sup_{m\in\Zn} |Q_{-j,m}|^{\frac1u-\frac1p} \Big(& \sum_{k:Q_{0,k}\subset Q_{-j,m}}  |\lambda_k|^p\big)^{\frac1p} \\
&\leq \sup_{m\in\Zn} |Q_{-j,m}|^{\frac1u-\frac1p}\ 2^{-j_\ell \frac{d}{u}} \ |Q_{-j,m}|^{\frac1p} = 2^{-\frac{d}{u}(j_\ell-j)}\le 1.
\end{align*}
In the second case for appropriate $\ell_1<\ldots <\ell_\nu$ and $m_{\ell_i}\in\Zn$, $i=1,\ldots ,\nu$, there is a finite family $Q_{-j_{\ell_i}, m_{\ell_i}}$ such that $\bigcup_{i=1}^\nu Q_{-j_{\ell_i}, m_{\ell_i}}\subset Q_{-j,m}$. Now $j_{\ell_1}<\ldots <j_{\ell_\nu}\le j$. So
\begin{align*}
  \sup_{m\in\Zn}  |Q_{-j,m}|^{\frac1u-\frac1p} & \Big(\sum_{k:Q_{0,k}\subset Q_{-j,m}} \!\!\!|\lambda_k|^p\Big)^{\frac1p} \\
       & \le \sup_{m\in\Zn}  |Q_{-j,m}|^{\frac1u-\frac1p} \Big(\sum_{i=1}^\nu \sum_{k:Q_{0,k}\subset Q_{-j_{\ell_i}, m_{\ell_i}}}\!\!\!|\lambda_k|^p\Big)^{\frac1p} \\ &
     \le \sup_{m\in\Zn} |Q_{-j,m}|^{\frac1u-\frac1p} \Big(\sum_{i=1}^\nu 2^{j_{\ell_i}d(1-\frac{p}{u})}\Big)^{\frac1p} \\
  &\leq C \sup_{m\in\Zn} |Q_{-j,m}|^{\frac1u-\frac1p} 2^{-j_{\ell_\nu} d(\frac{1}{u}-\frac{1}{p})}  = C  2^{d(\frac{1}{u}-\frac1p)(j-j_{\ell_\nu})}\le C.
\end{align*}
Hence $\|\lambda|\ms\| \leq C $. 

Now consider two sequences $\lambda^1 =\{\lambda^1_k\}_{k\in\Zn}$ and 
$\lambda^2 =\{\lambda^2_k\}_{k\in\Zn}$ which are different. Therefore there exists at least one $k_0\in\Zn$ such that $\lambda^1_{k_0}\neq \lambda^2_{k_0}$. By construction this means $|\lambda^1_{k_0} -\lambda^2_{k_0}| = 2\ 2^{-j_{\ell_0} \frac{d}{u}}$ for some appropriate $\ell_0\in\no$ and $m_{\ell_0}\in\Zn$ with $Q_{0,k_0}\subset Q_{-j_{\ell_0}, m_{\ell_0}}$. Thus
\[
\|\lambda^1-\lambda^2| \ms\| \geq  2\ 2^{-j_{\ell_0} \frac{d}{u}} = 2^{1-j_{\ell_0}\frac{d}{u}}.
\]
Since the set of all admitted sequences $\lambda$ in \eqref{ms-13} is non-countable, having the cardinality of $\R$, $\ms$ is not separable. 
\end{proof}

\begin{ownremark}
Let us mention briefly that in \cite{GuKiSch} further (one-dimensional) approaches to weak and generalised Morrey sequence spaces were considered.
\end{ownremark}

\section{Embeddings}\label{sec-2}
We prove our main result about embeddings of different Morrey sequence spaces. Here we also use and adapt some ideas of our paper \cite{HaSk-bm1}.

\begin{theorem}  \label{cont}
Let    $0<p_1\leq u_1<\infty$ and  $0<p_2\leq u_2<\infty$.  Then the embedding 
\begin{equation} \label{ms1cont}
	 \mse\hookrightarrow \msz
\end{equation}
is continuous  if, and only if, 
the following conditions hold
\begin{equation}\label{ms3acont}
u_1 \le u_2\qquad \text{and}\qquad \frac{p_2}{u_2} \le\frac{p_1}{u_1}.
\end{equation}
The embedding \eqref{ms1cont} is never compact.
\end{theorem}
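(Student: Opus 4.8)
The plan is to prove continuity and non-compactness separately, and to split the continuity part into sufficiency and necessity of \eqref{ms3acont}. Throughout I abbreviate $M=\|\lambda|\mse\|$ and, for a dyadic cube $Q=Q_{-j,m}$ with $N=|Q|=2^{jd}\ge1$, I write $a_k=|\lambda_k|$ for the $N$ indices $k$ with $Q_{0,k}\subset Q$. Two consequences of the definition of $\mse$ will be used repeatedly: the single-point bound $a_k\le M$ (this is Proposition~\ref{Prop-1-ms}(iii)) and the whole-cube bound $\big(\sum_{k\in Q}a_k^{p_1}\big)^{1/p_1}\le N^{1/p_1-1/u_1}M$, both valid uniformly in $j$ and $m$.

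For sufficiency I would bound, uniformly in $j,m$, the quantity $S(Q)=N^{1/u_2-1/p_2}\big(\sum_{k\in Q}a_k^{p_2}\big)^{1/p_2}$ by a multiple of $M$, and then take the supremum to obtain $\|\lambda|\msz\|\ls M$. I split into two cases. If $p_2\le p_1$, Hölder's inequality on the $N$ terms gives $\big(\sum_{k\in Q}a_k^{p_2}\big)^{1/p_2}\le N^{1/p_2-1/p_1}\big(\sum_{k\in Q}a_k^{p_1}\big)^{1/p_1}$, and together with the whole-cube bound this yields $S(Q)\le N^{1/u_2-1/u_1}M\le M$, using only $u_1\le u_2$ (note that when $p_2\le p_1$ the second condition in \eqref{ms3acont} is automatically implied by the first). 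The harder case is $p_2>p_1$, where this naive Hölder estimate produces a positive power of $N$ and breaks down; here the key step is to interpolate via the pointwise bound, writing $a_k^{p_2}=a_k^{p_1}\,a_k^{p_2-p_1}\le a_k^{p_1}M^{p_2-p_1}$, whence $\sum_{k\in Q}a_k^{p_2}\le M^{p_2-p_1}N^{1-p_1/u_1}M^{p_1}$ and therefore $S(Q)\le N^{1/u_2-p_1/(u_1p_2)}M$. This is bounded for all $N\ge1$ exactly when $1/u_2\le p_1/(u_1p_2)$, i.e.\ under the second condition in \eqref{ms3acont}. I expect this interpolation-type estimate to be the main point of the continuity proof.

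For necessity I would test the assumed continuous embedding on explicit sequences. To force $u_1\le u_2$, take $\lambda$ to be the indicator of a single dyadic cube $Q_{-j,0}$; a scale-by-scale computation over all dyadic cubes gives $\|\lambda|m_{u,p}\|=N^{1/u}=2^{jd/u}$ independently of $p$, so continuity forces $2^{jd/u_2}\ls 2^{jd/u_1}$ for all $j$, hence $u_1\le u_2$. To force the second condition I would use a self-similar ``spread'' sequence: inside $Q_{-j,0}$ place a single $1$ in each dyadic subcube of side $2^i$ (so $2^{(j-i)d}$ ones in total, one per side-$2^i$ box), and choose $i\approx j\,p_1/u_1$. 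Evaluating the Morrey norm scale by scale then shows $\|\lambda|\mse\|\sim1$, while $\|\lambda|\msz\|\gtrsim 2^{jd(1/u_2-p_1/(u_1p_2))}$ already from testing on the single cube $Q_{-j,0}$. Letting $j\to\infty$, boundedness of the embedding forces $1/u_2\le p_1/(u_1p_2)$, i.e.\ $\frac{p_2}{u_2}\le\frac{p_1}{u_1}$. The only technical nuisance is that $i$ must be an integer, which I would handle by passing to a suitable subsequence of $j$ or by replacing $i$ with $\lfloor jp_1/u_1\rfloor$ and absorbing the resulting $O(1)$ error into the constants.

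Finally, non-compactness follows from a translation (escape-to-infinity) argument requiring no restriction on the parameters. For distinct points $a_n\in\Zn$ consider the unit sequences $\delta_{a_n}$ having value $1$ at $a_n$ and zero elsewhere. For a single nonzero entry the supremum in the Morrey norm is attained on the unit cube containing that entry, so $\|\delta_{a_n}|\mse\|=\|\delta_{a_n}|\msz\|=1$; thus $\{\delta_{a_n}\}$ is bounded in $\mse$. On the other hand, for $n\ne m$ the difference $\delta_{a_n}-\delta_{a_m}$ still carries a $\pm1$ entry, so testing the $\msz$ norm on the unit cube at $a_n$ gives $\|\delta_{a_n}-\delta_{a_m}|\msz\|\ge1$. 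Hence the image of the bounded sequence $\{\delta_{a_n}\}$ has no Cauchy, and so no convergent, subsequence in $\msz$, and the embedding cannot be compact. The only subtlety worth recording is that single-point sequences are genuinely stable under translation even though the dyadic grid itself is not translation invariant.
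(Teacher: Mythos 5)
Your proof is correct, and its skeleton matches the paper's: H\"older's inequality plus an interpolation-via-$\ell_\infty$ bound for sufficiency, a spread-out configuration of unit entries for the necessity of $\frac{p_2}{u_2}\le\frac{p_1}{u_1}$, and translated unit vectors for non-compactness; but each part is organised differently, and the differences are worth recording. For sufficiency, the paper first treats $u_1=u_2$ (H\"older) and the borderline case $\frac{p_2}{u_2}=\frac{p_1}{u_1}$, where normalising $\|\lambda|\mse\|=1$ gives $|\lambda_k|\le 1$ and hence $\sum|\lambda_k|^{p_2}\le\sum|\lambda_k|^{p_1}$ --- exactly your interpolation step --- and then deduces the general case by composing through the intermediate space $m_{u_2,p}$ with $p=\frac{p_1u_2}{u_1}$; you instead split into $p_2\le p_1$ versus $p_2>p_1$ and estimate directly in one pass, which avoids the composition and is arguably cleaner, at the cost of having to observe that the second condition in \eqref{ms3acont} is automatic in the first case. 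For the necessity of $u_1\le u_2$, the paper constructs a single multi-scale sequence supported on cubes $Q_{-j,m_j}$ escaping to infinity, normalised by $|Q_{-j,m_j}|^{-1/u_1}$, and shows it lies in $\mse$ but not in $\msz$; your family of indicators of the cubes $Q_{-j,0}$, tested one at a time against the assumed uniform bound, is simpler and avoids verifying the Morrey norm of an infinite multi-scale object. Your spread sequence (one unit entry per dyadic subcube of side $2^i$ with $i\approx j\frac{p_1}{u_1}$) is a concrete realisation of the paper's more abstract construction, in which $k_\nu=\whole{2^{d\nu(1-\frac{p_1}{u_1})}}$ ones are distributed so that every subcube $Q_{-\nu,\ell}$ contains at most $k_\nu$ of them; the two give the same norm bounds up to constants, and your version makes the scale-by-scale norm computation transparent. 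Finally, for non-compactness the paper factorises $\ell_{u_1}\hookrightarrow\mse\hookrightarrow\msz\hookrightarrow\ell_\infty$ and invokes the non-compactness of $\ell_{u_1}\hookrightarrow\ell_\infty$, whereas you run the unit-vector separation argument directly in the Morrey norms; both rest on the same phenomenon, yours being self-contained while the paper's is shorter given Proposition~\ref{Prop-1-ms}.
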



\begin{proof}
\noindent{\em Step 1}.~ First we prove the sufficiency of the conditions. 
If $u_1=u_2$, then  $p_2\le p_1$, and by the H\"older inequality we get  for any $j\in\no$ and $m\in\Zn$,
\begin{equation}
|Q_{-j,m}|^{-\frac{1}{ p_2}}\Big(\sum_{k:Q_{0,k}\subset Q_{-j,m}}\!\!\!|\lambda_{k}|^{p_2}\Big)^{\frac{ 1}{ p_2}} \le
|Q_{-j,m}|^{-\frac{1}{ p_1}}\Big(\sum_{k:Q_{0,k}\subset Q_{-j,m}}\!\!\!|\lambda_{k}|^{p_1}\Big)^{\frac{ 1}{ p_1}} .	
\label{ms-1}
\end{equation}
So \eqref{ms1cont} holds for  $u_1 = u_2$.  
 
Let now $u_1 < u_2$.  It should be clear that it is sufficient to consider sequences $\big\{\lambda_{m}\big\}_{m}$ satisfying the assumption
$$
\|\lambda|\mse\| = \sup_{j\in\no; m\in \Zn}\! |Q_{-j,m}|^{\frac{ 1}{ u_1} - \frac{ 1}{ p_1} }\Big(\sum_{k:Q_{0,k}\subset Q_{-j,m}}\!\!\!|\lambda_{k}|^{p_1}\Big)^{\frac{ 1}{ p_1}} = 1  . 
$$   
In that case  $|\lambda_{k}|\le 1$ for any $k\in\Zn$.  Let  $\frac{p_2}{u_2} = \frac{p_1}{u_1}$, i.e., $p_1<p_2$.  Then this leads for any $j\in\no$ and $m\in\Zn$ to

\begin{eqnarray}
	\sum_{k:Q_{0,k}\subset Q_{-j,m}}\!\!\!|\lambda_{k}|^{p_2} \le  	\sum_{k:Q_{0,k}\subset Q_{-j,m}}\!\!\!|\lambda_{k}|^{p_1} \le
	|Q_{-j,m}|^{1- \frac{ p_1}{ u_1}} = |Q_{-j,m}|^{1- \frac{ p_2}{ u_2}}. \nonumber 	   
\end{eqnarray}
The last inequality implies that
\begin{align}
  \|\lambda|\msz\| &=  \sup_{j\in\no; m\in \Zn}\! |Q_{-j,m}|^{\frac{ 1}{ u_2} - \frac{ 1}{ p_2} }\Big(\sum_{k:Q_{0,k}\subset Q_{-j,m}}\!\!\!|\lambda_{k}|^{p_2}\Big)^{\frac{ 1}{ p_2}} \nonumber\\
  & \le\  1=\|\lambda|\mse\|. 
\label{ms-2}
\end{align}

Combining the case $\frac{p_2}{u_2} = \frac{p_1}{u_1}$ with  the case $u_1 = u_2$ we can easily prove  \eqref{ms1cont} for general values of $p_2$ and $p_1$. Indeed, if  $p=\frac{p_1u_2}{u_1}$, then by \eqref{ms-2} applied to $\mse\hookrightarrow m_{u_2,p}$ we obtain
\begin{align}
  \sup_{j\in\no; m\in \Zn}\! |Q_{-j,m}|^{\frac{ 1}{ u_2} - \frac{ 1}{ p}}& \Big(\sum_{k:Q_{0,k}\subset Q_{-j,m}}\!\!\!|\lambda_{k}|^{p}\Big)^{\frac{ 1}{ p}}  \nonumber\\
  \le &
\sup_{j\in\no; m\in \Zn}\! |Q_{-j,m}|^{\frac{ 1}{ u_1} - \frac{ 1}{ p_1}}\Big(\sum_{k:Q_{0,k}\subset Q_{-j,m}}\!\!\!|\lambda_{k}|^{p_1}\Big)^{\frac{ 1}{ p_1}} . \label{ms-3}
\end{align}  
On the other hand, using \eqref{ms-1} applied to $p_2\leq p$, we arrive at 
\begin{align}
  \sup_{j\in\no; m\in \Zn}\!|Q_{-j,m}|^{\frac{ 1}{ u_2} - \frac{ 1}{ p_2} }& \Big(\sum_{k:Q_{0,k}\subset Q_{-j,m}}\!\!\!|\lambda_{k}|^{p}\Big)^{\frac{ 1}{ p_2}} \nonumber\\
  \le &
\sup_{j\in\no; m\in \Zn}\! |Q_{-j,m}|^{\frac{ 1}{ u_2} - \frac{ 1}{ p} }\Big(\sum_{k:Q_{0,k}\subset Q_{-j,m}}\!\!\!|\lambda_{k}|^{p}\Big)^{\frac{ 1}{ p}}.	
\label{ms-4}
\end{align}
Thus \eqref{ms-4} and \eqref{ms-3} imply   \eqref{ms1cont}. \\

\noindent{\em Step 2}.~   We consider the  necessity of the conditions.

{\em Substep 2.1.}~ First we assume that  $u_2 < u_1$. For any $j\in \N$ we put $m_j=(2^{2j}, 0,\ldots , 0)\in \Z^d$. We put 
\[
\lambda_{k} =\,\begin{cases}
|Q_{-j,m_j}|^{-\frac{1}{u_1}}& \text{if}\quad Q_{0,k}\subset Q_{-j,m_j}\\
0 & \text{otherwise}.
\end{cases}
\]
Then straightforward calculation shows that  $\|\lambda|\mse\| = 1$. On the other hand, 
\[ \sup_{j\in\no} |Q_{-j,m_j}|^{\frac{1}{u_2}-\frac{1}{p_2}} \Big(\sum_{k:Q_{0,k}\subset Q_{-j,m_j}}\!\!\! |\lambda_k|^{p_2}\Big)^{1/p_2} = \sup_{j\in\no}\, |Q_{-j,m_j}|^{\frac{1}{u_2}-\frac{1}{u_1}}= \infty .\]
So $\{\lambda_k\}_k$ does not belong to $\msz$.

{\em Substep 2.2.}~ Now we assume that  $u_1\leq u_2$ and  $\frac{p_1}{u_1} < \frac{p_2}{u_2}$ , in particular, $\frac{p_1}{u_1} < 1$. For any $j\in \N$ we put 
$$k_j = \whole{2^{dj(1 - \frac{p_1}{u_1})}},$$
recall $\whole{x}=\max\{l\in\Z: l \leq x\}$. Then $1\le k_\nu < 2^{d\nu}$ and 
\begin{equation}\label{30.12-1}
	k_j \le \ c_{p_1 q_1}\ 2^{d(j - \nu)} k_\nu, \qquad \text{if}\qquad 1\le \nu < j \, .
\end{equation}
For convenience let us assume that $c_{p_1 q_1}= 1$ (otherwise the argument below has to be modified in an obvious way). 
For any  $j\in \N$ we define a sequence $\lambda^{(j)} = \big\{\lambda^{(j)}_{k}\big\}_{k}$ in the following way. We assume that $k_\nu$ elements of the sequence  equal  $1$ and the rest is equal to  $0$.  If $Q_{0,k}\nsubseteq Q_{-j,0}$, then we put $\lambda^{(j)}_{k}= 0$. Moreover,  because of the inequality \eqref{30.12-1}, we can choose the elements that equal $1$ is such a way that the following property holds
\begin{align*}  
&\text{if}\quad Q_{-\nu,\ell}\subseteq Q_{-j,0} \quad \text{and} \quad Q_{-\nu,\ell} = \bigcup_{i=1}^{2^{d\nu}} Q_{0,m_i}, \\ 
&\text{then at most}\; k_\nu\; \text{elements}\; \lambda^{(j)}_{0,k_i}\; \text{equal}\; 1.  
\end{align*}
By construction, if $Q_{-\nu,\ell}\subseteq Q_{-j,0}$, then  
\begin{align}	
	\sum_{k:Q_{0,k}\subset
          Q_{-\nu,\ell}}\!\!\!|\lambda^{(j)}_{0,k}|^{q_1} \le k_\nu \le 2^{d\nu(1 - \frac{p_1}{u_1})}
\label{30.12-2}
\end{align}
and  the last sum is equal to $k_\nu$ if $\nu=j$. Thus 
\begin{equation}\label{30.12-3}
	\|\lambda^{(j)}|\mse\|\,\le \, 1 .
\end{equation}
Furthermore, the assumption $0<\frac{p_1}{u_1} < \frac{p_2}{u_2}\le 1$ implies that 
$$ 
	\frac{\whole{2^{dj(1 - \frac{p_1}{u_1})}}}{2^{dj(1 - \frac{p_2}{u_2})}}\, \longrightarrow \infty \qquad \text{if}\qquad j \rightarrow \infty\, .
$$ 
So for any $N\in \N$ there exists a number $j_N\in\nat$ such that 
$$
  N  2^{dj_N(1 - \frac{p_2}{u_2})} \le \whole{2^{dj_N(1 - \frac{p_1}{q_1})}} = k_{j_N} =  	\sum_{k:Q_{0,k}\subset
          Q_{-j_N,0}}\!\!\!|\lambda^{(j_N)}_{0,k}|^{p_2} . 
$$
But this immediately implies that  
\begin{equation}\label{30.12-4}
	N^{1/p_2}\, \le \, \|\lambda^{(j_N)}|\msz\|\, .
\end{equation}

However, since we assume that the embedding \eqref{ms1cont} holds, there is a positive constant $c>0$ such that    
$$
\|\lambda^{(j)}|\msz\| \,\le \, c\, 	\|\lambda^{(j)}|\mse\|\,\le \, c \qquad \text{for any}\qquad \lambda^{(j)}\in \mse\, .
$$
In view of the last inequalities  \eqref{30.12-3} and \eqref{30.12-4} we get 
$$
	N^{1/p_2}\, \le \, \|\lambda^{(j_N)}|\msz\|\,   \le C  \|\lambda^{(j_N)}|\mse\|\, \leq \, C \, , 
$$
and this leads to a contradiction for large  $N$.\\

\noindent{\em Step 3}.~ The non-compactness of \eqref{ms1cont} immediately follows from Proposition~\ref{Prop-1-ms},
\[
\ell_{u_1} \hookrightarrow \mse \hookrightarrow \msz\hookrightarrow \ell_\infty
\]
and the non-compactness of $\ell_{u_1}\hookrightarrow \ell_\infty$.
\end{proof}

\begin{corollary}\label{Coro-ms-1}
Let    $0<p_1\leq u_1<\infty$ and  $0<p_2\leq u_2<\infty$. Then 
$\mse= \msz$ (in the sense of equivalent norms) if, and only if,  $u_1=u_2$ and $p_1=p_2$.
\end{corollary}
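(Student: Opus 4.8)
The plan is to read the equality $\mse = \msz$ (with equivalence of norms) as the assertion that the two embeddings $\mse \hookrightarrow \msz$ and $\msz \hookrightarrow \mse$ are \emph{both} continuous, and then to invoke Theorem~\ref{cont} in each of the two directions. Since Theorem~\ref{cont} already provides a sharp characterisation of one-directional continuity, the corollary reduces to an elementary manipulation of the resulting inequalities, and I expect no genuine obstacle beyond setting up this reformulation correctly.

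First I would observe that, for (quasi-)Banach spaces with the same underlying set, coincidence with equivalent norms is precisely the conjunction of the two continuous embeddings. Applying Theorem~\ref{cont} to $\mse \hookrightarrow \msz$ yields $u_1 \le u_2$ together with $\frac{p_2}{u_2} \le \frac{p_1}{u_1}$; applying the same theorem to the reverse embedding $\msz \hookrightarrow \mse$, obtained simply by interchanging the roles of the two index pairs, yields $u_2 \le u_1$ together with $\frac{p_1}{u_1} \le \frac{p_2}{u_2}$.

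Next I would combine these four inequalities. The pair $u_1 \le u_2$ and $u_2 \le u_1$ forces $u_1 = u_2$, while the pair $\frac{p_2}{u_2} \le \frac{p_1}{u_1}$ and $\frac{p_1}{u_1} \le \frac{p_2}{u_2}$ forces $\frac{p_1}{u_1} = \frac{p_2}{u_2}$. Substituting $u_1 = u_2$ into the latter equality immediately gives $p_1 = p_2$, which establishes the necessity of the stated conditions.

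For sufficiency I would simply note that if $u_1 = u_2$ and $p_1 = p_2$, then the two defining expressions in Definition~\ref{D-ms} coincide verbatim, so the spaces are literally identical and in particular isomorphic. The only point requiring a moment's care is the reformulation of norm-equivalence as a two-sided continuous embedding; once that is in place, the argument is purely algebraic and uses nothing beyond Theorem~\ref{cont}.
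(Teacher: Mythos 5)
Your proof is correct and follows exactly the paper's approach: the paper's own proof is the one-liner ``This follows immediately from Theorem~\ref{cont}'', and your argument is precisely the intended expansion of that remark, reading norm-equivalence as two continuous embeddings and applying the theorem in both directions.
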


\begin{proof}
This follows immediately from Theorem~\ref{cont}. 
\end{proof}

\begin{corollary}\label{Coro-ms-2}
Let    $0<p_1\leq u_1<\infty$ and  $0<p_2\leq u_2<\infty$.
\bli
\item[{\bfseries\upshape (i)}]
Then $\mse \hookrightarrow \ell_{u_2}$ if, and only if, $u_1\leq u_2$ and $p_1=u_1$, that is, if, and only if, $\mse=\ell_{u_1}$ and $\ell_{u_1}\hookrightarrow \ell_{u_2}$.
\item[{\bfseries\upshape (ii)}]
Then $\ell_{u_1} \hookrightarrow \msz$ if, and only if, $u_1\leq u_2$, that is, if, and only if, $\ell_{u_1}\hookrightarrow \ell_{u_2}$.
\eli
\end{corollary}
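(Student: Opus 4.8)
The plan is to obtain both equivalences directly from Theorem~\ref{cont} by specialising its parameters, using the identification $\ell_u = m_{u,u}$ supplied by Proposition~\ref{Prop-1-ms}(ii). Before starting I would record two elementary facts: the standing assumption $p \le u$ always yields $\frac{p}{u} \le 1$, and in the scale of sequence spaces one has $\ell_{u_1} \hookrightarrow \ell_{u_2}$ if and only if $u_1 \le u_2$.

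For part (i) I would write $\ell_{u_2} = m_{u_2,u_2}$ and read off the criterion \eqref{ms3acont} for the embedding $\mse \hookrightarrow m_{u_2,u_2}$. It becomes $u_1 \le u_2$ together with $1 = \frac{u_2}{u_2} \le \frac{p_1}{u_1}$. Since $p_1 \le u_1$ forces $\frac{p_1}{u_1} \le 1$, the second condition collapses to the equality $p_1 = u_1$. Hence the embedding holds precisely when $u_1 \le u_2$ and $p_1 = u_1$; and under $p_1 = u_1$ we have $\mse = \ell_{u_1}$ by Proposition~\ref{Prop-1-ms}(ii), so the criterion is exactly $\mse = \ell_{u_1}$ and $\ell_{u_1} \hookrightarrow \ell_{u_2}$.

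For part (ii) I would argue symmetrically, now writing $\ell_{u_1} = m_{u_1,u_1}$ and applying \eqref{ms3acont} to $m_{u_1,u_1} \hookrightarrow \msz$. The two conditions become $u_1 \le u_2$ and $\frac{p_2}{u_2} \le \frac{u_1}{u_1} = 1$, and the second one holds automatically since $p_2 \le u_2$. Thus the embedding is continuous if and only if $u_1 \le u_2$, which is the same as the condition for $\ell_{u_1} \hookrightarrow \ell_{u_2}$.

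There is essentially no genuine obstacle here: the whole argument is bookkeeping with the pair of conditions in \eqref{ms3acont}. The one step that deserves to be stated carefully is the forced equality $p_1 = u_1$ in part (i), where the standing hypothesis $p_1 \le u_1$ is used in the direction opposite to its usual role, so that it combines with $\frac{p_1}{u_1} \ge 1$ to pin down equality.
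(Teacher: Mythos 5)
Your proposal is correct and is exactly the paper's approach: the paper's proof simply says the corollary ``follows immediately from Theorem~\ref{cont}'', and your specialisation of condition \eqref{ms3acont} with $p_2=u_2$ (resp.\ $p_1=u_1$) via $\ell_u=m_{u,u}$ is precisely the bookkeeping being left implicit there. Your explicit remark that $p_1\le u_1$ combines with $\frac{p_1}{u_1}\ge 1$ to force $p_1=u_1$ is the right way to fill in that detail.
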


\begin{proof}
This follows immediately from Theorem~\ref{cont}. 
\end{proof}

\begin{ownremark}
Let us mention the following essential feature: if $0<p<u<\infty$, that is, we are in the proper Morrey situation, then there is never an embedding into any space $\ell_r$ whenever $0<r<\infty$, but we always have $\ms \hookrightarrow \ell_\infty$, in view of Proposition~\ref{Prop-1-ms}(iii) and Corollary~\ref{Coro-ms-2}(i).
\end{ownremark}

\begin{ownremark}
We briefly want to compare Theorem~\ref{cont} with forerunners in 
\cite{GuKiSch} and in the parallel setting of Morrey function spaces.

In \cite[Prop.~2.4]{GuKiSch} the one-dimensional counterpart of Theorem~\ref{cont} can be found in the case when (in our notation) $1\leq p_2\leq p_1\leq u_1=u_2<\infty$, with some discussion about the sharpness of that result. Obviously condition \eqref{ms3acont} is automatically satisfied in this case. The method of their proof in \cite{GuKiSch} is different from ours. 

We turn to function spaces and first consider spaces $\M(Q)$ defined on a cube $Q$, where \eqref{morrey-func} has to be adapted appropriately. Then by a result of Piccinini in \cite{Pic69}, see also \cite{PicPhD}, for $0<p_i\leq u_i<\infty$, $i=1,2$, 
\[
\Me(Q)  \hookrightarrow \Mz(Q)\qquad\text{if, and only if,}\qquad
p_2\leq p_1 \quad\text{and}\quad u_2\leq u_1.
\]
This result was extended to $\Rn$ by Rosenthal in \cite{rosenthal}, reading as 
\[
\Me(\Rn) \hookrightarrow \Mz(\Rn) \qquad\text{if, and only if,}\qquad p_2\leq p_1\leq u_1=u_2.
\]
So a similar diversity as in the classical $L_p$-setting (spaces on bounded domains versus $\Rn$ versus sequence spaces $\ell_p$) is obvious.

What is, however, more surprising is the similarity with our result  \cite[Thm.~3.2]{HaSk-bm1}  in the context of sequence spaces $\mb$ appropriate for smoothness Morrey spaces. In the limiting situation $s_1-\frac{d}{u_1}=s_2-\frac{d}{u_2}$ (and in adapted notation) we have shown that
\[
\mbe \hookrightarrow \mbz
\]
if, and only if, \eqref{ms3acont} and $q_1\leq q_2$. 
\end{ownremark}

\section{The pre-dual of $\ms$}\label{sec-3}

Results concerning (pre-)dual spaces in the setting of Morrey function spaces have some history, we refer to \cite{alvarez,kalita,zorko}, and, more recently, to \cite{AX04} in this respect. We rely on the paper \cite[Sect. 4]{RoTri2} where also further discussion can be found.

\begin{definition}\label{level-seq-X}
Let $1\le p< u <\infty$ and $\frac{1}{p}+\frac{1}{p'}=1$, as usual.  For any $j\in\no$ we define ${\mathcal X}^{(j)}_{u,p}={\mathcal X}^{(j)}_{u,p}(\Zn) $ by
\begin{align}
  {\mathcal X}^{(j)}_{u,p}(\Zn) = \Big\{ \lambda= &\{\lambda_k\}_{k\in \Zn}\subset\C :\nonumber\\
&   \|\lambda\|^{(j)}_{u,p}  = 2^{jd(\frac{1}{p}-\frac{1}{u})}
\sum_{m\in \Zn}\Big(
\sum_{k:Q_{0,k}\subset Q_{-j,m}}
 \!\!\!|\lambda_k|^{p'} \Big)^{\frac{1}{p'}} < \infty 
\Big\}. \label{x-pi}
\end{align}
\end{definition}

\begin{lemma}\label{predual-l1}
Let $1\le p< u <\infty$ and $j\in\no$. Then
\begin{equation}\label{ms-5}
{\mathcal X}^{(j)}_{u,p}(\Zn)  =  \ell_{1}(\Zn)
\end{equation}
(in the sense of equivalent norms). Moreover, the embedding 
\[ \id_j : {\mathcal X}^{(j)}_{u,p}(\Zn)  \hookrightarrow  \ell_{u'}(\Zn) 
\]
satisfies
\begin{equation}\label{ms-6}
\left\|\id_j : {\mathcal X}^{(j)}_{u,p}(\Zn)  \hookrightarrow  \ell_{u'}(\Zn)\right\|=1.
\end{equation}
\end{lemma}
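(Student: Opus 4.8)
The plan is to exploit the block structure of the norm in \eqref{x-pi}. For fixed $j$ the dyadic cubes $\{Q_{-j,m}\}_{m\in\Zn}$ partition $\Rn$, and each $Q_{-j,m}$ contains exactly $N:=2^{jd}=|Q_{-j,m}|$ unit cubes $Q_{0,k}$. Writing $a_m:=\big(\sum_{k:Q_{0,k}\subset Q_{-j,m}}|\lambda_k|^{p'}\big)^{1/p'}$ for the $\ell_{p'}$-norm of the $m$-th block (with the usual convention $a_m=\max_k|\lambda_k|$ when $p=1$, $p'=\infty$), the defining norm becomes simply $\|\lambda\|^{(j)}_{u,p}=2^{jd(1/p-1/u)}\sum_m a_m$, i.e. an outer $\ell_1$-sum of inner $\ell_{p'}$-norms taken over blocks of fixed cardinality $N$. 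Everything reduces to comparing $\ell_{p'}$, $\ell_1$ and $\ell_{u'}$ norms on a single block of $N$ entries.

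For the identification \eqref{ms-5}, I would use inside each block the elementary two-sided bound $\|x\|_{p'}\le\|x\|_1\le N^{1/p}\|x\|_{p'}$, where the right inequality is Hölder (note $1-1/p'=1/p$). Summing over $m$ and inserting the prefactor gives the estimate $2^{-jd/u}\|\lambda|\ell_1\|\le\|\lambda\|^{(j)}_{u,p}\le 2^{jd(1/p-1/u)}\|\lambda|\ell_1\|$. For fixed $j$ both constants are finite and positive, so ${\mathcal X}^{(j)}_{u,p}=\ell_1$ with equivalent norms.

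For the operator norm \eqref{ms-6} the crucial observation is the conjugate-exponent identity $\tfrac1{u'}-\tfrac1{p'}=\tfrac1p-\tfrac1u$, which matches the prefactor exactly. Since $p<u$ forces $u'<p'$, Hölder inside each block yields $\big(\sum_{k:Q_{0,k}\subset Q_{-j,m}}|\lambda_k|^{u'}\big)^{1/u'}\le N^{1/u'-1/p'}a_m=2^{jd(1/p-1/u)}a_m$. Raising this to the power $u'$, summing over $m$, and then using $\ell_1\hookrightarrow\ell_{u'}$ (valid since $u'>1$) gives $\|\lambda|\ell_{u'}\|\le 2^{jd(1/p-1/u)}\sum_m a_m=\|\lambda\|^{(j)}_{u,p}$, hence $\|\id_j\|\le 1$. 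For sharpness I would test the embedding on the sequence $\lambda$ that equals $1$ on the $N=2^{jd}$ indices $k$ with $Q_{0,k}\subset Q_{-j,0}$ and vanishes otherwise: both inequalities above become equalities (the block is constant and there is a single nonzero block), and a direct computation gives $\|\lambda\|^{(j)}_{u,p}=2^{jd/u'}=\|\lambda|\ell_{u'}\|$, so the ratio equals $1$. This forces $\|\id_j\|=1$.

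All computations are elementary; the only genuinely conceptual point — and what I would flag as the heart of the statement — is recognising the identity $1/u'-1/p'=1/p-1/u$ that makes the dimensional Hölder factor cancel the prefactor $2^{jd(1/p-1/u)}$ precisely, together with the choice of a configuration (constant on one single block) that simultaneously saturates the block-wise Hölder step and the $\ell_1\hookrightarrow\ell_{u'}$ step. No other step presents a real obstacle.
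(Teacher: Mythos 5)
Your proposal is correct and takes essentially the same route as the paper: the identification \eqref{ms-5} via the block-wise comparison $\|x\|_{p'}\le\|x\|_1\le 2^{jd/p}\|x\|_{p'}$, the bound $\|\id_j\|\le 1$ via per-block H\"older with the conjugate-exponent cancellation $1/u'-1/p'=1/p-1/u$ followed by $\ell_1\hookrightarrow\ell_{u'}$ on the block norms, and the lower bound via a sequence constant on a single cube $Q_{-j,m_0}$ (your test sequence is just an unnormalised multiple of the paper's $\lambda^0$).
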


\begin{proof}
We begin with \eqref{ms-5}. Note that 
\[
\Big(\sum_{k:Q_{0,k}\subset Q_{-j,m}}
 \!\!\!|\lambda_k|^{p'} \Big)^{\frac{1}{p'}} \leq \sum_{k:Q_{0,k}\subset Q_{-j,m}}
 \!\!\!|\lambda_k| \ \leq \ 2^{j\frac{d}{p}}\ \Big(\sum_{k:Q_{0,k}\subset Q_{-j,m}}
 \!\!\!|\lambda_k|^{p'} \Big)^{\frac{1}{p'}}. 
\]
Hence, by definition,
\begin{align}
\|\lambda\|^{(j)}_{u,p} & = 2^{jd(\frac{1}{p}-\frac{1}{u})}
\sum_{m\in \Zn}\Big(\sum_{k:Q_{0,k}\subset Q_{-j,m}}
\!\!\!|\lambda_k|^{p'} \Big)^{\frac{1}{p'}}\nonumber\\
&\geq 
2^{jd(\frac{1}{p}-\frac{1}{u})} 
\sum_{m\in \Zn}\Big(\sum_{k:Q_{0,k}\subset Q_{-j,m}}
 \!\!\!|\lambda_k| \Big) 2^{-j\frac{d}{p}} \nonumber \\
&= 2^{-j\frac{d}{u}} \sum_{k\in\Zn} |\lambda_k| = 2^{-jd\frac1u} \|\lambda|\ell_1\|,\label{ms-7}
\end{align}
hence ${\mathcal X}^{(j)}_{u,p}  \hookrightarrow  \ell_1$. Conversely,
\begin{align}
\|\lambda\|^{(j)}_{u,p} & = 2^{jd(\frac{1}{p}-\frac{1}{u})}
\sum_{m\in \Zn}\Big(\sum_{k:Q_{0,k}\subset Q_{-j,m}}
\!\!\!|\lambda_k|^{p'} \Big)^{\frac{1}{p'}} \nonumber\\
& \leq 
2^{jd(\frac{1}{p}-\frac{1}{u})} 
\sum_{k\in\Zn} |\lambda_k| = 2^{jd(\frac{1}{p}-\frac{1}{u})} \|\lambda|\ell_1\|,\label{ms-8}
\end{align}
which results in $\ell_1\hookrightarrow {\mathcal X}^{(j)}_{u,p}$ and thus finishes the proof of \eqref{ms-5}.\\
Similar to \eqref{ms-7} we obtain for the embedding $\id_j$,
\begin{align}
\|\lambda\|^{(j)}_{u,p} & = 2^{jd(\frac{1}{p}-\frac{1}{u})}
\sum_{m\in \Zn}\Big(\sum_{k:Q_{0,k}\subset Q_{-j,m}}
\!\!\!|\lambda_k|^{p'} \Big)^{\frac{1}{p'}}\nonumber\\
&\geq 
2^{jd(\frac{1}{p}-\frac{1}{u})} 
\sum_{m\in \Zn}\Big(\sum_{k:Q_{0,k}\subset Q_{-j,m}}
\!\!\! |\lambda_k|^{u'} \Big)^{\frac{1}{u'}} 2^{-jd(\frac{1}{u'}-\frac{1}{p'})} \nonumber \\
&\geq\ 2^{jd(\frac1p-\frac1u-\frac1p+\frac1u)} \Big(\sum_{k\in\Zn} |\lambda_k|^{u'}\big)^{\frac{1}{u'}} = \|\lambda|\ell_{u'}\|,\label{ms-9}
\end{align}
since $p'>u'>1$. Thus $\|\id_j\|\leq 1$. Now let $m_0\in\Zn$ be fixed and consider $\lambda^0=\{\lambda_k^0\}_{k\in\Zn}$ given by
\[
\lambda_k^0 = \begin{cases} 2^{-j\frac{d}{u'}}, & \text{if}\ Q_{0,k}\subset Q_{-j,m_0}, \\ 0, & \text{otherwise}.
\end{cases}
\]
Thus $\|\lambda^0 | \ell_{u'}\| = 2^{-j\frac{d}{u'}} |Q_{-j,m_0}|^{\frac{1}{u'}}=1$, and
\begin{align*}
\|\lambda^0\|^{(j)}_{u,p}  & = 2^{jd(\frac{1}{p}-\frac{1}{u})}
\sum_{m\in \Zn}\Big(\sum_{k:Q_{0,k}\subset Q_{-j,m}}
\!\!\! |\lambda_k|^{p'} \Big)^{\frac{1}{p'}} = 
2^{jd(\frac{1}{p}-\frac{1}{u})} 2^{-j\frac{d}{u'}} |Q_{-j,m_0}|^{\frac{1}{p'}}\\
&= 2^{jd(\frac1p-\frac1u-\frac{1}{u'}+\frac{1}{p'})} = 1,
\end{align*}
such that finally $\|\id_j\| \geq 1$. This completes the proof of \eqref{ms-6}.
\end{proof}

Now we combine the above sequence spaces ${\mathcal X}^{(j)}_{u,p}(\Zn)$ at level $j\in\no$ as follows.

\begin{definition}
Let $1\le p< u<\infty$. We define $\xup=\xup(\Zn)$ by
\begin{align}
{\mathcal X}_{u,p} (\Zn) = \bigg\{\lambda & =\{\lambda_k\}_{k\in\Zn}\subset\C: \text{for any $j\in\no$ there exists}\ \lambda^{(j)}\in\xjup(\Zn)\ \nonumber\\& \text{such that}\quad \lambda=\sum_{j=0}^\infty \lambda^{(j)}, \quad\text{and}\quad \sum_{j=0}^\infty 
\|\lambda^{(j)}\|^{(j)}_{u,p} < \infty 
\bigg\},\label{x_up}
\end{align}
equipped with the norm 
\begin{equation}\label{x_up_norm}
\|\lambda| \xup \| = \inf \sum_{j=0}^\infty \|\lambda^{(j)}\|^{(j)}_{u,p} \ , 
\end{equation}   
where the infimum is taken over all admitted decompositions of $\lambda$ according to \eqref{x_up}. 
\end{definition}
 
\begin{proposition}\label{predual-l2-2a}
Let $1\leq p<u<\infty$. 
\bli
\item[{\bfseries\upshape (i)}]
Then
\begin{equation}\label{ms-10}
\ell_{1}(\Zn) \hookrightarrow \xup(\Zn) \hookrightarrow \ell_{u'}(\Zn), 
\end{equation}
and 
\begin{equation}\label{ms-11}
\left\|\id : \ell_{1}(\Zn) \hookrightarrow \xup(\Zn)\right\|=\left\|\id: \xup(\Zn) \hookrightarrow \ell_{u'}(\Zn)\right\| = 1.
\end{equation}
\item[{\bfseries\upshape (ii)}] 
Let for $n\in \Zn$,  $e^{(n)} = \{e^{(n)}_k\}_{k\in\Zn}$ be given by
\[ 
e^{(n)}_k = \begin{cases} 1 & \text{if} \quad n=k , \\ 0 & \text{otherwise}. \end{cases}
\] 
The system $\{e^{(n)}\}_{n\in\Zn}$ forms a normalised unconditional basis in $\xup(\Zn)$.
\item[{\bfseries\upshape (iii)}] 
$\xup(\Zn)$ is a separable Banach space. 
\eli
\end{proposition}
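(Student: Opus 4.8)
The three parts build on one another, so I would prove them in the order (i), then completeness, then (ii), then separability, drawing only on Lemma~\ref{predual-l1}.

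\textbf{Part (i).} I would obtain the left-hand embedding in \eqref{ms-10} from the trivial decomposition $\lambda=\lambda^{(0)}$, $\lambda^{(j)}=0$ for $j\geq1$. Since $Q_{0,k}\subset Q_{0,m}$ forces $k=m$, the level-zero norm $\|\cdot\|^{(0)}_{u,p}$ coincides exactly with $\|\cdot|\ell_1\|$, so this decomposition gives $\|\lambda|\xup\|\leq\|\lambda|\ell_1\|$, that is $\|\id\colon\ell_1\to\xup\|\leq1$. For the right-hand embedding I would take any admissible decomposition $\lambda=\sum_j\lambda^{(j)}$ and apply the triangle inequality in $\ell_{u'}$ (valid since $u>1$, hence $1<u'<\infty$) together with the norm-one embeddings $\id_j$ from \eqref{ms-6}, obtaining $\|\lambda|\ell_{u'}\|\leq\sum_j\|\lambda^{(j)}|\ell_{u'}\|\leq\sum_j\|\lambda^{(j)}\|^{(j)}_{u,p}$; passing to the infimum yields $\|\id\colon\xup\to\ell_{u'}\|\leq1$. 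To upgrade both to equalities I would test the right-hand embedding with the explicit sequence $\lambda^{0}$ from the proof of Lemma~\ref{predual-l1} and the left-hand one with a single $e^{(n)}$: in both cases the values $\|e^{(n)}|\ell_{u'}\|=\|e^{(n)}|\ell_1\|=1$ and the inequalities just established pin the intermediate $\xup$-norm to $1$. In particular $\|e^{(n)}|\xup\|=1$, which already gives the normalisation asserted in (ii).

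\textbf{Completeness (iii).} I would realise $\xup$ as a quotient of an $\ell_1$-sum. Let $E=\ell_1\big(\{\xjup\}_{j\in\no}\big)$ consist of all $(\lambda^{(j)})_{j}$ with $\lambda^{(j)}\in\xjup$ and $\sum_j\|\lambda^{(j)}\|^{(j)}_{u,p}<\infty$, normed by that sum; this is a Banach space by the standard $\ell_1$-direct-sum argument, each summand being complete since $\xjup=\ell_1$ by Lemma~\ref{predual-l1}. The summation map $S\colon E\to\ell_{u'}$, $S\big((\lambda^{(j)})_j\big)=\sum_j\lambda^{(j)}$, is well defined and bounded, because \eqref{ms-9} yields $\sum_j\|\lambda^{(j)}|\ell_{u'}\|\leq\sum_j\|\lambda^{(j)}\|^{(j)}_{u,p}$, so the series converges absolutely in $\ell_{u'}$. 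By \eqref{x_up} and \eqref{x_up_norm}, $\xup$ is precisely the range of $S$ and its norm is the quotient norm of $E/\ker S$. As $S$ is continuous and $\ell_{u'}$ is Hausdorff, $\ker S$ is closed, so $E/\ker S$ is a Banach space isometric to $\xup$; hence $\xup$ is complete.

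\textbf{Unconditional basis (ii).} Two ingredients remain. First, the \emph{density of finite sequences}: given $\lambda\in\xup$ and $\varepsilon>0$ I would fix any admissible decomposition $\lambda=\sum_j\lambda^{(j)}$, truncate the outer sum at a level $J$ with $\sum_{j>J}\|\lambda^{(j)}\|^{(j)}_{u,p}<\varepsilon/2$, and on each of the finitely many remaining levels replace $\lambda^{(j)}\in\ell_1$ by a finite truncation $\tilde\lambda^{(j)}$, using \eqref{ms-8} to bound $\|\lambda^{(j)}-\tilde\lambda^{(j)}\|^{(j)}_{u,p}$ by a multiple of the $\ell_1$-norm of the discarded coordinates; the resulting finite sequence is then within $\varepsilon$ of $\lambda$. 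Second, a \emph{multiplier bound}: for any $\theta=(\theta_k)$ with $|\theta_k|\leq1$ the operator $M_\theta\colon\lambda\mapsto(\theta_k\lambda_k)_k$ satisfies $\|M_\theta\lambda\|^{(j)}_{u,p}\leq\|\lambda\|^{(j)}_{u,p}$ on each level, directly from the monotonicity of the expression in \eqref{x-pi} with respect to $|\lambda_k|$; applying this termwise and passing to the infimum gives $\|M_\theta\|_{\xup\to\xup}\leq1$. Choosing $\theta=\mathbf 1_F$ shows the coordinate projections $P_F$ are contractions, and a standard $\varepsilon/2$ argument (combining $P_F\tilde\mu=\tilde\mu$ for a finite approximant $\tilde\mu$ with $\|P_F\|\leq1$) shows $P_F\lambda\to\lambda$ as $F\uparrow\Zn$. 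Uniqueness of coefficients, hence that $\{e^{(n)}\}$ is genuinely a basis with coefficient functionals $\lambda\mapsto\lambda_n$, follows from the coordinatewise continuity supplied by $\xup\hookrightarrow\ell_{u'}$; the uniform multiplier bound is exactly unconditionality with constant $1$.

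\textbf{Separability (iii) and main obstacle.} Since $\Zn$ is countable and finite sequences are dense, finite combinations of the $e^{(n)}$ with coefficients in $\mathbb Q+i\mathbb Q$ form a countable dense set, so $\xup$ is separable. The step I expect to demand the most care is the density of finite sequences, where the infimum defining \eqref{x_up_norm} and the infinite summation over the levels $j$ must be controlled simultaneously; completeness and separability then reduce to Lemma~\ref{predual-l1} together with soft functional-analytic arguments.
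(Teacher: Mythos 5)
Your argument is correct, and for part (i) it coincides with the paper's proof: the one\-/level decomposition $\lambda^{(0)}=\lambda$, $\lambda^{(j)}=0$ for $j\ge 1$ gives $\|\id:\ell_1\hookrightarrow\xup\|\le 1$ (your observation that $\|\cdot\|^{(0)}_{u,p}$ equals the $\ell_1$-norm exactly is even slightly sharper than the paper's use of \eqref{ms-8}), the triangle inequality in $\ell_{u'}$ together with \eqref{ms-6} gives $\|\id:\xup\hookrightarrow\ell_{u'}\|\le 1$, and the equalities \eqref{ms-11} are then pinned down by test elements; the paper does this last step by factoring $\xjup\hookrightarrow\xup\hookrightarrow\ell_{u'}$ and using multiplicativity of operator norms, which is the same computation in different clothing. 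Where you genuinely add content is in (ii) and (iii): the paper merely records $\|e^{(n)}|\xup\|=1$, completeness of the system and the sign inequality $\big\|\sum_{|n|\le\ell}\varepsilon_n\lambda_n e^{(n)}|\xup\big\|\le\big\|\sum_{|n|\le\ell}\lambda_n e^{(n)}|\xup\big\|$, and then delegates the unconditional-basis conclusion to the criterion in \cite[Theorem 6.7]{Heil}, while calling the proof of (iii) ``standard''. Your contraction bound $\|M_\theta\|\le 1$ for multipliers with $|\theta_k|\le 1$, the density of finitely supported sequences obtained by truncating an admissible decomposition (controlling the finitely many retained levels via \eqref{ms-8} and the tail via the norm \eqref{x_up_norm}), and the net convergence $P_F\lambda\to\lambda$ together with continuity of the coordinate functionals through $\xup\hookrightarrow\ell_{u'}$ constitute exactly a self-contained proof of that cited criterion in this concrete situation, so the two routes are mathematically the same, yours being complete and the paper's shorter. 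Likewise your completeness argument --- realising $\xup$ isometrically as the quotient $E/\ker S$, where $E=\ell_1\big(\{\xjup\}_{j\in\no}\big)$ is a Banach space and the summation map $S$ is continuous into $\ell_{u'}$, so that $\ker S$ is closed and the quotient norm is precisely \eqref{x_up_norm} --- is the standard argument the paper suppresses, carried out correctly; separability then follows, as you say, from rational finite combinations of the $e^{(n)}$.
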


\begin{proof}
\noindent{\em Step 1}.~ Let $\lambda\in\ell_1$. Then Lemma~\ref{predual-l1} applied with $j=0$, in particular \eqref{ms-5}, imply that we obtain an admitted representation of $\lambda$ in \eqref{x_up} choosing $\lambda^{(0)}=\lambda\in {\mathcal X}^{(0)}_{u,p} $ and $\lambda^{(j)}=0$, $j\in\nat$. This ensures $\lambda\in\xup$ and, in view of \eqref{x_up_norm} and \eqref{ms-8},
\[
\|\lambda|\xup\| \leq \|\lambda\|^{(0)}_{u,p}\leq \|\lambda|\ell_1\|.
\]
Thus $\|\id : \ell_{1}(\Zn) \hookrightarrow \xup(\Zn) \|\leq 1$. 
If $\lambda\in\xup$, then there exists a decomposition according to \eqref{x_up} and we can conclude
\[
\|\lambda|\ell_{u'}\| \leq \sum_{j=0}^\infty \left\|\lambda^{(j)}|\ell_{u'}\right\|\leq \sum_{j=0}^\infty \|\lambda^{(j)} \|^{(j)}_{u,p},
\]
where we applied \eqref{ms-6}. Talking the infimum over all possible representations according to \eqref{x_up} we get by \eqref{x_up_norm} that
\[
\|\lambda|\ell_{u'}\| \leq \|\lambda|\xup\|, \quad\text{and thus}\quad 
\left\|\id: \xup(\Zn) \hookrightarrow \ell_{u'}(\Zn)\right\|\leq 1.
\]
To complete the proof of (i), we have to show the converse inequalities in \eqref{ms-11}. However, $\xjup\hookrightarrow \xup$ for any $j\in\no$ with $\|\id:\xjup\hookrightarrow \xup\|\leq 1$, such that \eqref{ms-6} yields
\[
1=\| \id: \xjup\hookrightarrow \ell_{u'}\|\leq \|\id:\xup\hookrightarrow \ell_{u'}\|,
\]
confirming the latter equality in \eqref{ms-11}. Now we are done, since
\[
1=\|\id:\ell_1\hookrightarrow \ell_{u'}\|\leq \|\id:\ell_1\hookrightarrow \xup\| \ \|\id:\xup\hookrightarrow \ell_{u'}\| = \|\id:\ell_1\hookrightarrow \xup\|.
\]

\noindent{\em Step 2}.~
Concerning (ii), one can easily calculate that $\|e^{(n)}|\xup\|=1$ and that the system is complete  in $\xup$. So the statement follows from the trivial inequality 
\[ \Big\|\sum_{|n|\leq \ell} \varepsilon_{n}\lambda_n e^{(n)}|\xup\Big\| \le \Big\|\sum_{|n|\leq \ell} \lambda_n e^{(n)}|\xup\Big\|,\]    
$\varepsilon_{n} = \pm 1$,  $\lambda_n\in \C$,  cf. eg. \cite[Theorem 6.7]{Heil}. \\

\noindent{\em Step 3}.~ The proof of (iii) is standard.
\end{proof}

\begin{proposition}\label{xup-predual}
Let $1\le p < u <\infty$. Then ${\mathcal X}_{u,p}(\Z^d)$ is a pre-dual space of $m_{u,p}(\Z^d)$.  
\end{proposition}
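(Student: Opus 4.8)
The plan is to establish an isometric isomorphism $(\xup)'\cong\ms$, realised through the natural dual pairing $\langle\lambda,\mu\rangle=\sum_{k\in\Zn}\lambda_k\mu_k$; this identifies the Banach space dual of the separable space $\xup$ (Proposition~\ref{predual-l2-2a}(iii)) with $\ms$, and hence exhibits $\xup$ as a pre-dual of $\ms$. I would prove the two inequalities $\|L_\mu\|\le\|\mu|\ms\|$ (every $\mu\in\ms$ acts boundedly) and $\|\mu|\ms\|\le\|L\|$ (every bounded functional arises from such a $\mu$) separately, each by a H\"older/duality argument carried out level by level over the dyadic cubes.

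First I would treat the direction $\ms\hookrightarrow(\xup)'$. Fix $\mu\in\ms$ and $\lambda\in\xup$ with an admitted decomposition $\lambda=\sum_{j=0}^\infty\lambda^{(j)}$, $\lambda^{(j)}\in\xjup$. At a single level $j$ the cubes $\{Q_{-j,m}\}_{m\in\Zn}$ partition $\Rn$, and on each $Q_{-j,m}$ H\"older's inequality with exponents $p',p$ gives $\sum_{k:Q_{0,k}\subset Q_{-j,m}}|\lambda^{(j)}_k\mu_k|\le\big(\sum_k|\lambda^{(j)}_k|^{p'}\big)^{1/p'}\big(\sum_k|\mu_k|^{p}\big)^{1/p}$. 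Bounding the second factor by $2^{jd(\frac1p-\frac1u)}\|\mu|\ms\|$ (which is exactly the defining estimate of the $\ms$-norm on $Q_{-j,m}$, since $|Q_{-j,m}|=2^{jd}$) and summing over $m$ produces $|\langle\lambda^{(j)},\mu\rangle|\le\|\mu|\ms\|\,\|\lambda^{(j)}\|^{(j)}_{u,p}$. Summing over $j$ and passing to the infimum over all decompositions yields $|\langle\lambda,\mu\rangle|\le\|\mu|\ms\|\,\|\lambda|\xup\|$, so $\|L_\mu\|\le\|\mu|\ms\|$.

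For the converse direction I would use that $\{e^{(n)}\}_{n\in\Zn}$ is an unconditional basis of $\xup$ (Proposition~\ref{predual-l2-2a}(ii)): any $L\in(\xup)'$ is determined by the sequence $\mu_n:=L(e^{(n)})$, and $L=L_\mu$ on the dense span of the basis, hence everywhere by continuity. It remains to show $\mu\in\ms$ with $\|\mu|\ms\|\le\|L\|$. To this end I fix a cube $Q_{-j,m}$ and test $L$ against the dual sequence $\lambda$ supported in $Q_{-j,m}$ with $\lambda_k=\overline{\mu_k}\,|\mu_k|^{p-2}$ (for $p=1$ simply the unimodular sign of $\mu_k$), a single-level element of $\xjup$. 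Using $(p-1)p'=p$ one computes $L(\lambda)=\sum_{k:Q_{0,k}\subset Q_{-j,m}}|\mu_k|^p$ and $\|\lambda\|^{(j)}_{u,p}=2^{jd(\frac1p-\frac1u)}\big(\sum_{k:Q_{0,k}\subset Q_{-j,m}}|\mu_k|^{p}\big)^{1/p'}$; since $\xjup\hookrightarrow\xup$ with norm $\le1$ we have $\|\lambda|\xup\|\le\|\lambda\|^{(j)}_{u,p}$, and the bound $|L(\lambda)|\le\|L\|\,\|\lambda|\xup\|$ rearranges to $2^{jd(\frac1u-\frac1p)}\big(\sum_{k:Q_{0,k}\subset Q_{-j,m}}|\mu_k|^p\big)^{1/p}\le\|L\|$. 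Taking the supremum over all $j,m$ gives $\|\mu|\ms\|\le\|L\|$, and combined with the first direction $\|\mu|\ms\|=\|L_\mu\|$, so the correspondence is isometric.

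I expect the main obstacle to be this converse direction: one must be sure the single-cube test sequences genuinely live in $\xup$ with $\xup$-norm controlled by the level-$j$ norm $\|\cdot\|^{(j)}_{u,p}$ (so that no cheaper multi-level decomposition spoils the estimate, which is precisely where $\|\id:\xjup\hookrightarrow\xup\|\le1$ from Proposition~\ref{predual-l2-2a} is essential), and one must verify that $L=L_\mu$ holds on all of $\xup$, for which the basis and separability statements of Proposition~\ref{predual-l2-2a} are exactly what is needed. The degenerate endpoint $p=1$ (where $p'=\infty$ and the inner $\ell_{p'}$-sum becomes a supremum) should be checked separately, but the same sign-sequence test handles it.
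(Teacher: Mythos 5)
Your proposal is correct and follows essentially the same route as the paper: the forward direction is the identical level-by-level H\"older estimate followed by the infimum over admissible decompositions, and the converse direction likewise tests the functional against sequences supported on a single dyadic cube, using $\|\id:\xjup\hookrightarrow\xup\|\le 1$ to control the $\xup$-norm by the level-$j$ norm. The only cosmetic difference is that where the paper invokes finite-dimensional $\ell_{p'}$--$\ell_p$ duality abstractly, you construct the norming sequence $\lambda_k=\overline{\mu_k}\,|\mu_k|^{p-2}$ explicitly, which amounts to the same argument.
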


\begin{proof}
Let $\mu\in m_{u,p}$,  $\lambda\in {\mathcal X}_{u,p}(\Z^d)$ and let $\lambda = \sum_{j=0}^{\infty} \lambda^{(j)}$, $\lambda^{(j)}\in {\mathcal X}^{(j)}_{u,p}$. Then
\begin{align*}
\Big|\sum_{k\in\Zn} \lambda_k\mu_k\Big|  & \le \sum_{j=0}^\infty \sum_{k\in\Zn} |\lambda_k^{(j)} \mu_k| \\
&= \sum_{j=0}^\infty \sum_{m\in\Zn} \sum_{k: Q_{0,k}\subset Q_{-j,m}}\!\!\! |\lambda_k^{(j)} \mu_k| \\
&\leq 
\sum_{j=0}^\infty \sum_{m\in \Z^d} \Big(\sum_{k:Q_{0,k}\subset Q_{-j,m}} \!\!\!|\mu_{k}|^p\Big)^{\frac{1}{p}} \Big(\sum_{k:Q_{0,k}\subset Q_{-j,m}} \!\!\!|\lambda^{(j)}_{k}|^{p'}\Big)^{\frac{1}{p'}} \\
& \le  \|\mu| \ms\| \sum_{j=0}^\infty 2^{dj(\frac{1}{p}-\frac{1}{u})} \sum_{m\in \Z^d} \Big(\sum_{k:Q_{0,k}\subset Q_{-j,m}}\!\!\! |\lambda^{(j)}_{k}|^{p'}\Big)^{\frac{1}{p'}}\\ &\le  \|\mu| \ms\| \sum_{j=0}^\infty  \|\lambda^{(j)}\|^{(j)}_{u,p}. 
\end{align*} 
Taking the infimum over all representations of $\lambda$ we get 
\begin{equation}\label{dual1}
\Big|\sum_{k\in\Zn} \lambda_k\mu_k\Big| \le \|\mu| \ms\| \|\lambda|\xup\| . 
\end{equation}

On the other hand, if   $f\in (\xup(\Zn))'$, then
\begin{equation*}
|f(\lambda)| \le \|f\|\,  \|\lambda|\xup\|,
\end{equation*} 
where $\|f\|=\sup_{\|\lambda|\xup\|=1} |f(\lambda)|$, as usual. 
For any dyadic cube $Q_{-\nu,m}$, $\nu\in\no$, we take 
\[\lambda^{(\nu,m)} = \sum_{k:Q_{0,k}\subset Q_{-\nu,m}} \!\!\! 2^{\nu d(\frac{1}{p}-\frac{1}{u})}\lambda_k e^{(k)}.\] 
Then 
$\lambda^{(\nu,m)} \in \xup$ if, and only if,  
$\{\lambda_k\}_{k\in\Zn}\in \ell^{2^{\nu d}}_{p'}(Q_{-\nu,m})$ and  $\|\lambda^{(\nu,m)}|\xup\| \le  \|  \{\lambda_k\}_k|\ell^{2^{\nu d}}_{p'}(Q_{-\nu,m})\|$.  
Moreover, 
\begin{align*}
\Big| \sum_{k:Q_{0,k}\subset Q_{-\nu,m}} \!\!\!\lambda_k 2^{\nu d(\frac{1}{p}-\frac{1}{u})} f(e^{(k)})\Big| & =  \ 
\left| f(\lambda^{(\nu,m)})\right|\,  \le \, 
\|f\| \, \|\lambda^{(\nu,m)} | \xup\| \\
& \le  \|f\| \,  \| \{\lambda_k\}_k|\ell^{2^{d\nu}}_{p'}(Q_{-\nu,m})\|.
\end{align*} 
By duality for the $\ell_p$ spaces we get  $ \{2^{\nu d(\frac{1}{p}-\frac{1}{u})} f(e^{(k)})\}_k\in \ell^{2^{d\nu}}_p(Q_{-\nu,m})$ and 
\begin{align*}
  |Q_{-\nu,m}|^{\frac{1}{u}-\frac{1}{p}}\left(\sum_{k:Q_{0,k}\subset Q_{-\nu,m}}\!\!\!|f(e^{(k)})|^p\right)^{\frac{1}{p}}  & = \| \{2^{\nu d(\frac{1}{p}-\frac{1}{u})} f(e^{(k)})\}_k|\ell^{2^{d\nu}}_p(Q_{-\nu,m})\|\\
  &\le \|f\|\, .
\end{align*}
So $\{f(e^{(k)})\}_k \in \ms(\Zn)$  and $\|\{f(e^{(k)})\}_k|\ms\|\le \|f\|$. 
\end{proof}

Next we define a closed proper subspace of $\ms$ as follows. Let $c_{00}$ denote the finite sequences in $\C$, that is, sequences which possess only finitely many non-vanishing elements. We define  $\ms^{00}=\ms^{00}(\Zn)$ to be the closure of $c_{00}$ in $\ms$,
\[
\ms^{00} = \overline{c_{00}}^{\|\cdot|\ms\|}.
\]
Obviously $\ms^{00}$ is separable. We shall prove below that $\xup$ is the dual space of $\ms^{00}$.  We begin with some general properties. For that reason, let us denote by $\ms^0= \ms^0(\Zn)$ the subspace of null sequences which belong to $\ms$,
\[
\ms^0= \ms  \cap c_0\ . 
\]
Then we have the following basic properties.

\begin{lemma}
Let $0<p<u<\infty$. Then $\ms^0$ and $\ms^{00}$ are proper closed subspaces of $\ms$, with 
\[ \ms^{00}\subsetneq\ms^0 \subsetneq \ms . \]
\end{lemma}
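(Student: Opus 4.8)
The plan is to clear away the soft parts first and then concentrate on the single genuinely nontrivial claim, the strictness of $\ms^{00}\subsetneq\ms^0$. That $\ms^0=\ms\cap c_0$ and $\ms^{00}=\overline{c_{00}}^{\,\|\cdot|\ms\|}$ are linear subspaces is immediate. The space $\ms^{00}$ is closed by construction, being a closure. For the closedness of $\ms^0$ I would invoke Proposition~\ref{Prop-1-ms}(iii), i.e. $\ms\hookrightarrow\ell_\infty$: if $\lambda^{(n)}\in\ms^0$ and $\lambda^{(n)}\to\lambda$ in $\ms$, then $\lambda^{(n)}\to\lambda$ in $\ell_\infty$ as well, and since each $\lambda^{(n)}$ lies in the closed subspace $c_0\subset\ell_\infty$, the limit $\lambda$ lies in $c_0$; as $\lambda\in\ms$ by completeness, we get $\lambda\in\ms\cap c_0=\ms^0$. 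The inclusion $\ms^{00}\subseteq\ms^0$ then follows because $c_{00}\subset c_0\cap\ms=\ms^0$ and $\ms^0$ is closed, so $\overline{c_{00}}=\ms^{00}\subseteq\ms^0$; and $\ms^0\subseteq\ms$ is trivial.

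The strictness $\ms^0\subsetneq\ms$ is a direct consequence of Proposition~\ref{Prop-1-ms}(v): the sequence with $\lambda_k=1$ for $k=(2^r,0,\dots,0)$ and $\lambda_k=0$ otherwise belongs to $\ms$ but not to $c_0$, hence lies in $\ms\setminus\ms^0$.

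The heart of the matter is $\ms^{00}\subsetneq\ms^0$, and here I would exhibit a null sequence in $\ms$ that stays at a fixed positive distance from every finite sequence. I take precisely the block sequence from the non-separability construction \eqref{ms-13} (one fixed sign suffices), where the disjoint dyadic cubes $Q_{-j_\ell,m_\ell}$ are now additionally chosen so that they escape to infinity and $j_\ell\to\infty$. As already established there, $\|\lambda|\ms\|\le C$, so $\lambda\in\ms$; moreover $\lambda\in c_0$, since for any $\varepsilon>0$ the values $2^{-j_\ell d/u}$ exceed $\varepsilon$ only for the finitely many blocks with $j_\ell\le \tfrac{u}{d}\log_2(1/\varepsilon)$, each of which is finite, so $\{k:|\lambda_k|\ge\varepsilon\}$ is finite. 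Thus $\lambda\in\ms^0$. Now given any $\mu\in c_{00}$, supported in $\{|k|\le R_0\}$, I choose a block $Q_{-j_\ell,m_\ell}$ lying entirely in $\{|k|>R_0\}$, so that $(\lambda-\mu)_k=\lambda_k=2^{-j_\ell d/u}$ for all $k$ with $Q_{0,k}\subset Q_{-j_\ell,m_\ell}$. Estimating the $\ms$-norm of $\lambda-\mu$ from below by the single term corresponding to that one cube, and using $|Q_{-j_\ell,m_\ell}|=2^{j_\ell d}$ together with the exact cancellation $2^{j_\ell d(\frac1u-\frac1p)}\bigl(2^{j_\ell d}\,2^{-j_\ell d p/u}\bigr)^{1/p}=1$, gives $\|\lambda-\mu|\ms\|\ge 1$. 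Since $\mu\in c_{00}$ was arbitrary, $\operatorname{dist}(\lambda,c_{00})\ge 1>0$, whence $\lambda\notin\ms^{00}$.

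The main obstacle is exactly this last step: one must produce an element of $c_0\cap\ms$ whose Morrey norm does not feel removal of any finite initial segment. The two ingredients that make it work are, first, the arrangement of the blocks so that the sequence lies in $c_0$ while keeping $\|\lambda|\ms\|$ bounded, and second, the observation that the Morrey scaling exponent $\tfrac1u-\tfrac1p$ is tuned precisely so that the contribution of a single full block is the constant $1$, independent of the block's size. Everything else is soft functional analysis.
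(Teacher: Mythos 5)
Your proof is correct and follows essentially the same route as the paper: the same soft arguments for closedness via $\ms\hookrightarrow\ell_\infty$, the same use of Proposition~\ref{Prop-1-ms}(v) for $\ms^0\subsetneq\ms$, and the same escaping-block counterexample (the paper takes exactly your construction with $j_\ell=\ell$ and $m_\ell=(2^{2\ell},0,\dots,0)$) for $\ms^{00}\subsetneq\ms^0$. Your explicit computation that each full block contributes exactly $1$ to the norm, so that $\operatorname{dist}(\lambda,c_{00})\ge 1$, is a welcome filling-in of the step the paper dismisses with ``obviously $\lambda\notin\ms^{00}$''.
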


\begin{proof}
By definition, $\ms^{00}$ is a closed subspace of $\ms$. The fact that $\ms^0 \subsetneq \ms$ is a proper subspace of $\ms$ follows from Proposition~\ref{Prop-1-ms}(v). We show that $\ms^0$ is closed in $\ms$,
\[
\ms^0 = \overline{\ms^0}^{\|\cdot|\ms\|}.
\]
Clearly $\ms^0 \subseteq \overline{\ms^0}^{\|\cdot|\ms\|}$, so we have to verify the converse inclusion. Let $\lambda=\{\lambda_k\}_{k\in\Zn}\in\overline{\ms^0}^{\|\cdot|\ms\|}$ and $\ve>0$ be arbitrary. Then, by definition, there exists some $\mu=\{\mu_k\}_{k\in\Zn}\in\ms^0$ such that
\begin{equation}\label{ms-12}
\|\mu-\lambda|\ms\| = \sup_{j\in\no, m\in\Zn} |Q_{-j,m}|^{\frac1u-\frac1p} \Big(\sum_{k:Q_{0,k}\subset Q_{-j,m}} \!\!\!|\mu_k-\lambda_k|^p\Big)^{\frac1p} < \ve.
\end{equation}
Now $\mu\in\ms^0\subset \ms$ and $\ms$ is complete, thus $\lambda\in\ms$. Moreover, applying \eqref{ms-12} with $j=0$ implies that 
\[
\|\mu-\lambda|\ell_\infty\| = \sup_{k\in\Zn} |\mu_k-\lambda_k|<\ve.
\]
However, $\mu\in\ms^0 \subset c_0$ thus leads to $\lambda\in c_0$. So finally $\lambda\in \ms \cap c_0 = \ms^0$.\\
It remains to verify that $\ms^{00}\subsetneq\ms^0$. First note that, by definition, $\ms^{00} \subseteq \ms^0$. Now consider special lattice points $m_j=(2^{2j},0,\ldots , 0)\in\Zn$, $j\in\no$, and put
\[
\lambda_{k} = \begin{cases}
2^{-j\frac{d}{u}} & \text{if}\quad Q_{0,k}\subset Q_{-j,m_j} \\
0 & \text{otherwise}.
\end{cases}
\]
Then $\lambda\in c_0\cap \ms=\ms^0$, but obviously $\lambda\notin \ms^{00}$. 
\end{proof}

\begin{ownremark} 
The above result sheds some further light on the difference of the two norms $\|\cdot|\ell_\infty\| $ and $\|\cdot|\ms\|$, since in the classical setting 
\[ \overline{c_{00}}^{\|\cdot|\ell_\infty\|} = c_0
\]
is well-known, in contrast to $\ms^{00} \subsetneq \ms^0$.
\end{ownremark}

We need the following lemma.

\begin{lemma}\label{predual-l3}
Let $0<p<u<\infty$, and $\lambda\in \ms^{00}(\Zn)$. Then there exists a dyadic cube $Q(\lambda)$ such that
	\[\|\lambda|\ms\| = |Q(\lambda)|^{\frac{1}{u}-\frac{1}{p}}\left(\sum_{k:Q_{0,k}\subset Q(\lambda)} \!\!|\lambda_k|^p\right)^{\frac{1}{p}}.  \] 
\end{lemma}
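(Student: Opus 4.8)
The plan is to use two facts that hold precisely for elements of $\ms^{00}$ and not for general $\lambda\in\ms$: density of the finite sequences $c_{00}$, and the inclusion $\ms^{00}\subseteq\ms^0=\ms\cap c_0$ from the preceding lemma. The geometric engine behind everything is that, since $p<u$, the weight is a genuine decay factor, $|Q_{-j,m}|^{\frac1u-\frac1p}=2^{jd(\frac1u-\frac1p)}\to 0$ as $j\to\infty$. For a dyadic cube $Q$ of side length at least $1$ abbreviate
\[ a_Q(\lambda)=|Q|^{\frac1u-\frac1p}\Big(\sum_{k:\,Q_{0,k}\subset Q}|\lambda_k|^p\Big)^{1/p}, \]
so that $\|\lambda|\ms\|=\sup_Q a_Q(\lambda)$, the supremum running over all such $Q$. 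If $\lambda=0$ the claim is trivial; otherwise $\|\lambda|\ms\|>0$ and the aim is to show this supremum is attained.

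First I would prove a uniform decay of the contribution of large cubes: for $\lambda\in\ms^{00}$,
\[ \sup_{m\in\Zn} a_{Q_{-j,m}}(\lambda)\xrightarrow[j\to\infty]{}0. \]
Given $\ve>0$, choose $\mu\in c_{00}$ with $\|\lambda-\mu|\ms\|<\ve$. Since $\mu$ has only finitely many nonzero entries, $a_{Q_{-j,m}}(\mu)\le 2^{jd(\frac1u-\frac1p)}\|\mu|\ell_p\|\to 0$ as $j\to\infty$, uniformly in $m$. Each $a_Q$ is a constant multiple of an $\ell_p$-quasinorm, hence $\theta$-subadditive with $\theta=\min(1,p)$, so $a_{Q_{-j,m}}(\lambda)^{\theta}\le a_{Q_{-j,m}}(\mu)^{\theta}+a_{Q_{-j,m}}(\lambda-\mu)^{\theta}\le a_{Q_{-j,m}}(\mu)^{\theta}+\|\lambda-\mu|\ms\|^{\theta}$, whence $\sup_m a_{Q_{-j,m}}(\lambda)$ is eventually below a fixed multiple of $\ve$. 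This is the step where membership in $\ms^{00}$ rather than in $\ms$ is essential, and I expect it to be the main obstacle: for a general $\lambda\in\ms$ this limit need not vanish (cf. the non-separability example), and then the supremum genuinely may fail to be attained.

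Second I would control the spatial tail at each fixed scale. Because $\ms^{00}\subseteq\ms^0=\ms\cap c_0$, we have $\lambda\in c_0$; for fixed $j$ the cube $Q_{-j,m}$ contains exactly $2^{jd}$ lattice points, all of which run off to infinity as $|m|\to\infty$, so $a_{Q_{-j,m}}(\lambda)\to 0$ as $|m|\to\infty$ by finiteness of the sum. Consequently, for every fixed $j$ the supremum $\sup_m a_{Q_{-j,m}}(\lambda)$ is attained at some $m_j\in\Zn$.

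Finally I would combine the two reductions. By the first step there is $J\in\no$ with $\sup_m a_{Q_{-j,m}}(\lambda)<\|\lambda|\ms\|$ for all $j>J$, so the defining supremum is actually a maximum over the finitely many scales $j\in\{0,\dots,J\}$; by the second step each of these finitely many scale-suprema is attained at some $m_j$. Therefore $\|\lambda|\ms\|=\max_{0\le j\le J}a_{Q_{-j,m_j}}(\lambda)$ is attained at one of these cubes, and we take $Q(\lambda)$ to be that cube. Everything apart from the uniform-in-$m$ decay of the first step is a routine finiteness-plus-vanishing-at-infinity argument.
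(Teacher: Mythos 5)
Your proof is correct, and it takes a genuinely different route from the paper's. The paper fixes $\ve<1-2^{d(\frac1u-\frac1p)}$ and uses the definition of $\ms^{00}$ to produce a \emph{single} dyadic cube $\widetilde{Q}$ with $\|\lambda-\lambda|_{\widetilde{Q}}|\ms\|\le\ve\|\lambda|\ms\|$, where $\lambda|_{\widetilde{Q}}$ is the restriction of $\lambda$ to $\widetilde{Q}$. It then runs the dyadic trichotomy (any dyadic cube $Q$ is disjoint from $\widetilde{Q}$, strictly contains it, or is contained in it): in the first case $a_Q(\lambda)\le\ve\|\lambda|\ms\|$, in the second case strict inclusion of dyadic cubes costs the factor $2^{d(\frac1u-\frac1p)}<1$ and one gets $a_Q(\lambda)\le\bigl(2^{d(\frac1u-\frac1p)}+\ve\bigr)\|\lambda|\ms\|<\|\lambda|\ms\|$, so the supremum is a maximum over the finitely many dyadic cubes contained in $\widetilde{Q}$. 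You instead split the reduction into scale and position: density of $c_{00}$ together with the decay $2^{jd(\frac1u-\frac1p)}\to 0$ eliminates all but finitely many levels $j$, and the inclusion $\ms^{00}\subset c_0$ from the preceding lemma eliminates the spatial tail at each fixed level; both routes end with a finite maximum and both hinge on the same two ingredients, approximability by finite sequences and $p<u$.

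Two remarks. First, your route is actually more robust than the paper's. A dyadic cube never crosses a coordinate hyperplane, so the paper's approximation step fails verbatim for sequences charging several ``quadrants'': in $d=1$, for $\lambda$ with $\lambda_{-1}=\lambda_0=1$ and $\lambda_k=0$ otherwise, every dyadic cube $\widetilde{Q}$ misses one of the two nonzero entries, whence $\|\lambda-\lambda|_{\widetilde{Q}}|\ms\|\ge\|\lambda|\ms\|$; repairing this requires up to $2^d$ cubes (or the equivalent norms of Remark~\ref{rem-u=infty}), whereas your scale-plus-tail argument never needs the approximant's support to sit inside one dyadic cube. Second, a phrasing point in your final step: the term-wise strict inequalities $\sup_m a_{Q_{-j,m}}(\lambda)<\|\lambda|\ms\|$ for all $j>J$ would not by themselves prevent the supremum over the levels $j>J$ from still equalling $\|\lambda|\ms\|$; you should quote the full strength of your first step (the level-suprema tend to $0$, hence are $\le\frac12\|\lambda|\ms\|$, say, uniformly for $j>J$), which is exactly what your argument provides.
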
 

\begin{proof}
For any sequence $\lambda\in \ms$ and any dyadic cube $Q$, we shall denote by $\lambda|_Q$ the restriction of $\lambda$ to $Q$, i.e., $(\lambda|_Q)_k = \lambda_k$ if $Q_{0,k}\subset Q$ and $(\lambda|_Q)_k = 0$ otherwise. Without loss of generality we may assume $\lambda\not\equiv 0$. 
	
	We choose a positive number $\varepsilon < 1- 2^{d(\frac{1}{u}-\frac{1}{p})}$. If $\lambda\in \ms^{00}$, then there exists a dyadic cube $\widetilde{Q}$ such that 
\begin{equation*}
\|\lambda - \lambda|_{\widetilde{Q}}|\ms\| \le \varepsilon \|\lambda|\ms\|. 
\end{equation*}   
If $Q\cap \widetilde{Q}= \emptyset$, then 
\begin{align*}
|Q|^{\frac{1}{u}-\frac{1}{p}}\left(\sum_{k:Q_{0,k}\subset Q} \!\!|\lambda_k|^p\right)^{\frac{1}{p}} &\leq
|Q|^{\frac{1}{u}-\frac{1}{p}}\left(\sum_{k:Q_{0,k}\subset Q}\!\! |\lambda_k- (\lambda|_{\widetilde{Q}})_k|^p\right)^{\frac{1}{p}} \\
& \le \varepsilon \|\lambda|\ms\|<\|\lambda|\ms\|.
\end{align*}
If $\widetilde{Q}\varsubsetneq Q$,  then 	
  \begin{align*}
    |Q|^{\frac{1}{u}-\frac{1}{p}}\left(\sum_{k:Q_{0,k}\subset Q} \!\!|\lambda_k|^p\right)^{\frac{1}{p}} \le & \ 
    2^{d(\frac{1}{u}-\frac{1}{p})} |\widetilde{Q}|^{\frac{1}{u}-\frac{1}{p}}\left(\sum_{k:Q_{0,k}\subset \widetilde{Q}} \!\!|(\lambda|_{\widetilde{Q}})_k|^p\right)^{\frac{1}{p}}  \\   &+|Q|^{\frac{1}{u}-\frac{1}{p}}\left(\sum_{k:Q_{0,k}\subset Q} \!\!|\lambda_k- (\lambda|_{\widetilde{Q}})_k|^p\right)^{\frac{1}{p}} \\
    \le & \left(2^{d(\frac{1}{u}-\frac{1}{p})}+\varepsilon \right) \|\lambda|\ms\|<\|\lambda|\ms\|
   \end{align*}
by the choice of $\ve$. Therefore 
    \begin{align*}
    \|\lambda|\ms\| = \max_{Q\subset \widetilde{Q}} |Q|^{\frac{1}{u}-\frac{1}{p}}\left(\sum_{k:Q_{0,k}\subset Q} \!\!|\lambda_k|^p\right)^{\frac{1}{p}}  
    \end{align*}       
and the lemma is proved.
\end{proof}

\begin{proposition}\label{dual-p}
	Let $1\le p < u <\infty$. The dual space to  $\ms^{00}(\Zn)$ is isometrically isomorphic to   $\xup(\Zn)$. 
\end{proposition}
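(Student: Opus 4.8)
The plan is to set up the natural duality pairing and identify $(\ms^{00})'$ with $\xup$ through it. I define $\Phi\colon\xup\to(\ms^{00})'$ by $(\Phi\mu)(\lambda)=\sum_{k\in\Zn}\lambda_k\mu_k$ for $\lambda\in\ms^{00}$ and $\mu\in\xup$. That $\Phi\mu$ is a well-defined bounded functional with $\|\Phi\mu\|\le\|\mu|\xup\|$ is exactly the estimate \eqref{dual1} from Proposition~\ref{xup-predual}, now read with the roles of the two sequences interchanged (using $\ms^{00}\subset\ms$). Injectivity is immediate, since $(\Phi\mu)(e^{(k)})=\mu_k$, so $\Phi\mu=0$ forces $\mu=0$.

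Next I would prove that $\Phi$ is isometric. For $\mu\in\xup$, Proposition~\ref{xup-predual} gives $\|\mu|\xup\|=\sup\{|\sum_k\nu_k\mu_k|:\nu\in\ms,\ \|\nu|\ms\|\le1\}$. Writing $\nu^{(L)}=\sum_{|k|\le L}\nu_k e^{(k)}\in c_{00}\subset\ms^{00}$, the restriction to $\{|k|\le L\}$ is contractive on $\ms$, so $\|\nu^{(L)}|\ms\|\le\|\nu|\ms\|$; and since $\{e^{(k)}\}$ is a basis of $\xup$ (Proposition~\ref{predual-l2-2a}), one has $\sum_k\nu_k\mu_k=\lim_L\sum_{|k|\le L}\nu_k\mu_k=\lim_L(\Phi\mu)(\nu^{(L)})$. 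Hence $|\sum_k\nu_k\mu_k|\le\|\Phi\mu\|$, and taking the supremum yields $\|\mu|\xup\|\le\|\Phi\mu\|$, so together with the first paragraph $\|\Phi\mu\|=\|\mu|\xup\|$.

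It remains to show $\Phi$ is onto. Given $f\in(\ms^{00})'$ I put $\mu_k=f(e^{(k)})$; then $f$ and $\Phi\mu$ agree on $c_{00}$, hence on $\ms^{00}=\overline{c_{00}}$ once we know $\mu\in\xup$. For the truncations $\mu^{(L)}=\sum_{|k|\le L}\mu_k e^{(k)}\in\xup$ the same interchange gives $\sum_k\nu_k\mu^{(L)}_k=f(\nu^{(L)})$, whence $\|\mu^{(L)}|\xup\|\le\|f\|$ uniformly in $L$. Thus the partial sums of $\mu$ are bounded in $\xup$, and everything reduces to bounded completeness of the basis $\{e^{(k)}\}$ in $\xup$, equivalently to the fact that $\{e^{(k)}\}$ is a shrinking basis of $\ms^{00}$, i.e. $\alpha_L:=\sup\{|f(\eta)|:\eta\in\ms^{00},\ \supp\eta\subset\{|k|>L\},\ \|\eta|\ms\|\le1\}\to0$ as $L\to\infty$.

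This last point is the main obstacle, and it is exactly where Lemma~\ref{predual-l3} enters. Suppose $\alpha:=\lim_L\alpha_L>0$. Approximating the near-extremal $\eta$'s by finite sequences and pushing their supports to infinity, I would select $\eta_i\in c_{00}$ with $\|\eta_i|\ms\|\le1$ and $f(\eta_i)>\alpha/2$ (after multiplying by a unimodular constant, which preserves the $\ms$-norm) whose supports lie in pairwise disjoint dyadic cubes $\widehat Q_i$ of strictly increasing side lengths. For $\eta^{(n)}=\sum_{i=1}^n\eta_i$ disjointness gives $f(\eta^{(n)})=\sum_{i=1}^n f(\eta_i)>n\alpha/2$. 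On the other hand Lemma~\ref{predual-l3} provides a single dyadic cube $Q$ realising $\|\eta^{(n)}|\ms\|$; by dyadic nesting $Q$ either lies inside one $\widehat Q_i$, so that only $\eta_i$ contributes and the value is $\le\|\eta_i|\ms\|\le1$, or $Q$ contains the cubes $\widehat Q_i$ it meets, and then the normalisation $\sum_{Q_{0,k}\subset\widehat Q_i}|(\eta_i)_k|^p\le|\widehat Q_i|^{1-\frac pu}$, together with $|Q|\ge|\widehat Q_{i^*}|$ for the largest contributing cube $\widehat Q_{i^*}$ and the geometric growth of the $|\widehat Q_i|$, yields $\|\eta^{(n)}|\ms\|\le C$ with $C$ independent of $n$. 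This contradicts $n\alpha/2<f(\eta^{(n)})\le C\|f\|$ for large $n$, so $\alpha=0$. Consequently $\{\mu^{(L)}\}$ is Cauchy in $\xup$, its limit $\mu$ lies in $\xup$ with $\|\mu|\xup\|\le\|f\|$, and $\Phi\mu=f$; combined with the isometry this proves that $(\ms^{00})'$ is isometrically isomorphic to $\xup$.
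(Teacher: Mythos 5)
Your overall strategy --- identifying $(\ms^{00})'$ with $\xup$ via the canonical pairing $\Phi$, proving $\Phi$ is isometric from the pre-duality $(\xup)'=\ms$ of Proposition~\ref{xup-predual}, and reducing surjectivity to the shrinking property $\alpha_L\to 0$ of the unconditional basis $\{e^{(k)}\}$ of $\ms^{00}$ --- is sound, and it is genuinely different from the paper's route: the paper embeds $\ms^{00}$ isometrically into a vector-valued space $c_0(A_{j,m})$, invokes the known duality $c_0(A_{j,m})'=\ell_1(A'_{j,m})$, and extends a given functional by Hahn--Banach, so no gliding-hump argument is needed there. Your first two steps (boundedness and injectivity of $\Phi$, the isometry via truncations $\nu^{(L)}$, and the reduction of surjectivity to the shrinking property) are correct.

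The gap is in the selection step of your gliding hump. You cannot, in general, choose the near-extremal vectors $\eta_i$ so that their supports lie in \emph{pairwise disjoint} dyadic cubes $\widehat Q_i$: dyadic cubes are not translation invariant, and a finite set far from the origin need not be contained in any dyadic cube avoiding a prescribed bounded region. Concretely, for $d=1$ the pair $\{2^b-1,\,2^b\}$ is contained in no dyadic interval other than $[0,2^l)$ with $l\ge b+1$; so if the vectors supplied by $f$ all straddle such boundaries, every admissible $\widehat Q_i$ contains $[0,2^{b_1+1})$ and hence all previously chosen cubes, and disjointness is unattainable. This is not cosmetic: once the $\widehat Q_i$ are nested rather than disjoint, your dichotomy (``$Q$ lies inside one $\widehat Q_i$, so only $\eta_i$ contributes, or $Q$ contains the cubes it meets'') fails, and the uniform bound $\|\eta^{(n)}|\ms\|\le C$ is precisely what is at stake; note that disjointness of the \emph{supports} alone is not enough, since $n$ unit spikes at consecutive integers each have $\ms$-norm one while their sum has norm $\sim n^{1/u}$. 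The fix is to separate adaptively by mass rather than by cubes: having chosen $\eta_1,\dots,\eta_i$, let $R_i$ be the maximal $|k|$ occurring in their supports and $M_i=\sum_{j\le i}\sum_k|(\eta_j)_k|^p$, and pick $L_{i+1}>R_i$ so large that any dyadic cube containing lattice points $k,k'$ with $|k|>L_{i+1}$ and $|k'|\le R_i$ satisfies $|Q|^{1-\frac pu}\ge M_i$ (possible since such a cube has side length at least $(L_{i+1}-R_i)/\sqrt d$ and $1-\frac pu>0$). Then, writing $i^*$ for the largest index whose support meets a given dyadic cube $Q$, one gets $\sum_{k:Q_{0,k}\subset Q}|\eta^{(n)}_k|^p\le |Q|^{1-\frac pu}+M_{i^*-1}\le 2|Q|^{1-\frac pu}$, so $\|\eta^{(n)}|\ms\|\le 2^{1/p}$ uniformly in $n$, restoring your contradiction with $f(\eta^{(n)})>n\alpha/2$. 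With this replacement (which no longer needs Lemma~\ref{predual-l3}) your proof goes through.
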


\begin{proof}
By Proposition~\ref{xup-predual}, the space $\xup$ is the pre-dual space of $\ms$. So it is sufficient to show that any functional on  $\ms^{00}$ can be represented by some element of $\xup$ with the equality of norms. 
	
	First we prove that the space $\ms^{00}$ can be  isometrically embedded into  a closed subspace of an appropriate vector valued $c_0$ space. Let for $j\in \no$ and $m\in \Zn$,  $A_{j,m} = \ell_{p}(Q_{-j,m}, w_{j,m})$ be a weighted finite-dimensional $\ell_{p}$ space, equipped  with the norm  
\[ \|\gamma| A_{j,m}\| = \|\gamma_\nu\|^{(j,m)}_p = \left( \sum_{\nu=1}^{2^{jd}}|\gamma_\nu|^p |Q_{-j,m}|^{\frac{p}{u}-1}\right)^{\frac{1}{p}} = 2^{jd(\frac1u-\frac1p)} \left( \sum_{\nu=1}^{2^{jd}}|\gamma_\nu|^p \right)^{\frac{1}{p}},\] 
where $\gamma=\{\gamma_{\nu}\}_{\nu=1}^{2^{jd}}$.

The space $c_0(A_{j,m})$ is the space of all sequences $a = \{a^{(j,m)}\}_{j\in\no, m\in\Zn}$ with $a^{(j,m)}\in A_{j,m}$, $j\in\no$, $m\in\Zn$, and such that $\|a^{(j,m)}|A_{j,m}\|\rightarrow 0$ if $j+ |m|\rightarrow \infty$. We equip   $c_0(A_{j,m})$ with the usual norm, i.e.,  
$$\|a|c_0(A_{j,m})\| = \sup_{j\in\no, m\in\Zn}  \|a^{(j,m)}|A_{j,m}\| .$$ 

It is well known that the dual space of $c_0(A_{j,m})$ is $\ell_1(A'_{j,m})$, where $b = \{b^{(j,m)}\}_{j\in\no, m\in\Zn}\in \ell_1(A'_{j,m})$ means
 \[
 \|b|\ell_1(A'_{j,m})\| = \sum_{j\in\no, m\in\Zn}  \|b^{(j,m)}|A'_{j,m}\| <\infty.
 \]
 Moreover, 
 \[
(a,b) = \sum_{j\in\no, m\in\Zn} (a_{j,m},b_{j,m}) , 
 \]
 cf., e.g., \cite[Lemma 1.11.1]{Tri-int}.
 
 Let $\lambda=\{\lambda_k\}_{k\in\Zn}\in \ms^{00}$. We define the mapping 
 \begin{equation}
 T: \ms^{00}\ni \lambda \mapsto \{\lambda^{(j,m)}\}_{j\in\no, m\in\Zn}\in c_0(A_{j,m}) 
 \end{equation}
 by putting  $\lambda^{(j,m)}_k = \lambda_k$ if $Q_{0,k}\subset Q_{-j,m}$. 
 
An argument similar to that one used in the proof of Lemma \ref{predual-l3} shows that $\|T(\lambda)|A_{j,m}\| \rightarrow 0$ if $j\rightarrow \infty$ or/and $|m|\rightarrow \infty$. Furthermore, by construction, 
 \[ \|\{\lambda^{(j,m)}\}_{j,m} | c_0(A_{j,m})\| = \|\lambda| \ms \|  .\] 
So we can identify $\ms^{00}$ with a closed subspace of $c_0(A_{j,m})$.  

Let $f\in (\ms)'$. The above identification and the Hahn-Banach theorem imply that $f$ can be extended to a continuous linear functional $\widetilde{f}$ on  $c_0(A_{j,m})$ and $\|\widetilde{f}\|= \|f\|$.  But $\widetilde{f}$ has a representation of the form 
\begin{equation}\label{dual2}
\widetilde{f}(\{\lambda^{(j,m)}\}_{j,m}) = \sum_{j\in\no, m\in\Zn}\ \sum_{k:Q_{0,k}\subset Q_{-j,m}} \!\!\!\lambda^{(j,m)}_{k}\mu^{(j,m)}_{k} ,  
\end{equation}
and 
\begin{equation}\label{dual3}
\|\widetilde{f}\| = \sum_{j\in\no, m\in\Zn} |Q_{-j,m}|^{\frac{1}{p}-\frac{1}{u}}\left(\sum_{k:Q_{0,k}\subset Q_{-j,m}} \!\!\!|\mu^{(j,m)}_{k}|^{p'}\right)^{\frac{1}{p'}} .  
\end{equation}

If $\{\lambda^{(j,m)}\}_{j,m}= T (\lambda)$, then the sum \eqref{dual2} can be rearranged in the following way, 
\begin{equation}\label{dual4}
f(\lambda)= \widetilde{f}(T(\lambda)) = \sum_{j=0}^\infty\ \sum_{k\in\Zn} \lambda_{k}\mu^{(j)}_{k}, 
\end{equation}
where the sequence $\mu^{(j)}=\{\mu^{(j)}_k\}_{k\in\Zn}$ is given by $\mu^{(j)}_{k}=\mu^{(j,m)}_{k}$ if $Q_{0,k}\subset Q_{-j,m}$. Recall that for any $k$ there is exactly one cube $Q_{-j,m}$ of size $2^{jd}$ such that    $Q_{0,k}\subset Q_{-j,m}$. Moreover, \eqref{dual3}
reads as
\begin{equation*}
\|f\| = \|\widetilde{f}\| = \sum_{j=0}^\infty \
2^{jd(\frac{1}{p}-\frac{1}{u})}
\sum_{m\in \Zn}\Big(
\sum_{k:Q_{0,k}\subset Q_{-j,m}} \!\!\!\!2^{dj(\frac{1}{p}-\frac{1}{u})}
 |\mu^{(j)}_k|^{p'} \Big)^{\frac{1}{p'}} = \sum_{j=0}^\infty \|\mu^{(j)}\|_{u,p}^{(j)}
%
\end{equation*}
by \eqref{x-pi}. Hence definition \eqref{x_up} yields that the sequence $\mu= \sum_{j=0}^\infty \mu^{(j)}\in \xup$ and 
\[ f(\lambda) = \sum_{k\in\Zn}^\infty \lambda_k \mu_k \quad \text{with} \quad \|f\|= \|\mu|\xup\| .\]
\end{proof}

\begin{ownremark}
Similar calculations for the Morrey function spaces can be found in \cite{RoTri2}. Moreover, arguments similar to those used in the proof of Proposition~\ref{dual-p} show that 
\[\ell_{u}(\Zn) \hookrightarrow \ms^{00}(\Zn) \hookrightarrow c_0(\Zn) .\]
\end{ownremark}

\section{Pitt's compactness theorem}
Now we prove Pitt's theorem for the Morrey sequence spaces. We follow the approach presented in \cite{D} and \cite{FZ}. The original result reads as follows.

\begin{theorem}[\cite{Pitt}]
Let $1\le q<p<\infty$. Every bounded linear operator  from $\ell_p$ into $\ell_q$ or from $c_0$ into $\ell_q$ is compact.
\end{theorem}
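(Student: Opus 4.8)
The plan is to prove compactness by showing that $T$ is the operator-norm limit of finite-rank operators, the necessary decay being forced by the strict inequality $q<p$ (respectively by the geometry of $c_0$). Let $(e_n)$ denote the canonical unit vector basis of the domain $X\in\{\ell_p,c_0\}$, let $P_N$ be the coordinate projection onto $\operatorname{span}\{e_1,\dots,e_N\}$, and set $Q_N=\id-P_N$. Each $TP_N$ has finite rank, so it suffices to show that $\|TQ_N\|\to 0$ as $N\to\infty$; then $T=\lim_N TP_N$ in operator norm and $T$ is compact. Thus the whole theorem reduces to the uniform smallness of $T$ on the tails. Note that this scheme treats the reflexive case $\ell_p$ and the non-reflexive case $c_0$ simultaneously, so no appeal to weak compactness of the unit ball is needed.

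The engine of the proof is a gliding-hump (Bessaga--Pe\l czy\'nski) selection lemma, which I would state first: if $(y_n)$ is a weakly null sequence in $\ell_q$ with $\inf_n\|y_n|\ell_q\|>0$, then it admits a subsequence which, after an arbitrarily small perturbation, is disjointly supported, and hence satisfies a \emph{lower} $\ell_q$-estimate $\|\sum_n a_n y_n|\ell_q\|\ge c\big(\sum_n|a_n|^q\big)^{1/q}$ for some $c>0$. I would also record the two elementary facts that make this applicable: a bounded linear operator is weak-to-weak continuous, and a bounded sequence in $\ell_p$ ($p>1$) or in $c_0$ that is disjointly (hence coordinatewise-null) supported is weakly null.

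Next I would argue by contradiction. Suppose $\|TQ_N\|\not\to 0$; then there are $\delta>0$ and, for each $N$, a norm-one vector supported on coordinates beyond $N$ with image of norm $\ge\delta$. Using continuity of $T$ to replace each such vector by a finitely supported approximant, I would build inductively a \emph{disjointly supported} normalized block sequence $(u_n)$ in $X$ with $\|Tu_n|\ell_q\|\ge\delta/2$. By the facts above, $(u_n)$ is weakly null, whence $(Tu_n)$ is weakly null in $\ell_q$ with norms bounded below, so the selection lemma yields $\big\|\sum_{n=1}^N Tu_n|\ell_q\big\|\ge c\,N^{1/q}$. On the other hand, since the $u_n$ are disjoint and normalized, $\big\|\sum_{n=1}^N u_n|X\big\|$ equals $N^{1/p}$ when $X=\ell_p$ and equals $1$ when $X=c_0$. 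Combining with $\big\|\sum_{n=1}^N Tu_n|\ell_q\big\|\le\|T\|\,\big\|\sum_{n=1}^N u_n|X\big\|$ gives $\|T\|\,N^{1/p}\ge c\,N^{1/q}$ in the $\ell_p$ case and $\|T\|\ge c\,N^{1/q}$ in the $c_0$ case. Because $\tfrac1q-\tfrac1p>0$, the right-hand sides tend to $\infty$ with $N$, which is the desired contradiction; hence $\|TQ_N\|\to 0$ and $T$ is compact.

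The step I expect to be the main obstacle is the selection lemma, specifically producing the almost-disjoint block subsequence and turning it into a genuine lower $\ell_q$-estimate with uniform constant; the perturbation bookkeeping in the gliding-hump construction, together with the interplay of finitely supported approximation and the lower estimate, is where the care is needed. (For the borderline value $q=1$ one may shortcut this entirely: $\ell_1$ has the Schur property, so the weakly null sequence $(Tu_n)$ would have to be norm-null, immediately contradicting $\|Tu_n|\ell_q\|\ge\delta/2$.)
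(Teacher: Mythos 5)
Your proof is correct, but it follows a genuinely different route from the one the paper is built around. The paper itself does not reprove this classical statement --- it quotes it from \cite{Pitt} --- but the method it adopts and then extends to Morrey sequence spaces (Theorem~\ref{pitt-ms}) is the ``nonlinear'' argument of Delpech \cite{D} and Fabian--Zizler \cite{FZ}: since the domain has separable dual, every bounded sequence admits a weak Cauchy subsequence, so compactness follows once $T$ is shown to be weak-to-norm sequentially continuous, and this continuity is forced by the exact asymptotic identity $\limsup_n\|x+w^{(n)}\|=\max\bigl\{\|x\|,\limsup_n\|w^{(n)}\|\bigr\}$ in $c_0$ (with the corresponding $q$-th power sum identity in $\ell_q$); the Morrey counterpart of this identity is precisely Lemma~\ref{pitt-lemma}. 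You instead reduce compactness to $\|TQ_N\|\to 0$, run a gliding-hump construction producing disjointly supported normalized blocks $u_n$ with $\|Tu_n|\ell_q\|\ge\delta/2$, and use Bessaga--Pe\l czy\'nski selection to obtain the lower bound $c\,N^{1/q}$ against the upper bounds $\|T\|\,N^{1/p}$ (for $\ell_p$) resp.\ $\|T\|$ (for $c_0$), contradicting $q<p$; your Schur-property shortcut for $q=1$ is also correct, and the selection lemma you flag as the main obstacle is indeed standard. Both routes are sound. Yours is self-contained classical sequence-space combinatorics; the paper's choice buys transferability: the limsup identity is exactly the property that survives in $\ms^{00}$, a space that is not rearrangement invariant and where disjoint blocks do not behave like an $\ell_p$-basis, so a block-basis argument like yours would be hard to push through there, whereas Delpech's scheme carries over essentially verbatim --- which is why the paper follows it.
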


We start with the following lemma that shows the similarity of $\ms^{00}(\Z^d)$ to $c_0$ if $p<u$. 

\begin{lemma}\label{pitt-lemma}
Let $0<p<u<\infty$ and $w^{(n)}$ be a sequence in $\ms^{00}(\Zn)$, which is weakly convergent to zero, $w_n\rightharpoondown 0$. Then for any $\lambda\in \ms^{00}(\Zn)$, 
\[
\limsup_{n\to\infty} \|\lambda + w^{(n)}|\ms(\Zn)\| = \max\big\{\|\lambda|\ms(\Zn)\|, \limsup_{n\to\infty} \|w^{(n)}|\ms(\Zn)\| \big\}\, .
\]
\end{lemma}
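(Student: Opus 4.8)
The plan is to prove the two inequalities separately. The inequality
\[
\limsup_{n\to\infty} \|\lambda + w^{(n)}|\ms\| \ge \max\big\{\|\lambda|\ms\|, \limsup_{n\to\infty} \|w^{(n)}|\ms\|\big\}
\]
should be the easier half. First I would note that since $w^{(n)}\rightharpoondown 0$, testing against each coordinate functional $e^{(k)}$ (which lives in the pre-dual $\xup$ by Proposition~\ref{predual-l2-2a}) gives $w^{(n)}_k\to 0$ for each fixed $k$. On any single dyadic cube $Q$ the relevant sum involves only finitely many coordinates, so $\|(\lambda+w^{(n)})|_Q|\ms\|\to \||\lambda|_Q|\ms\|$ for each fixed $Q$; taking the supremum over $Q$ and then the $\limsup$ yields $\limsup_n\|\lambda+w^{(n)}|\ms\|\ge\|\lambda|\ms\|$. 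For the other term one uses a translation-type argument: evaluating the norm of $\lambda+w^{(n)}$ on cubes that are far out (away from the essentially finite support of $\lambda$, which is possible since $\lambda\in\ms^{00}$ is approximable by finite sequences) recovers $\limsup_n\|w^{(n)}|\ms\|$.

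The substantial direction is the upper bound
\[
\limsup_{n\to\infty} \|\lambda + w^{(n)}|\ms\| \le \max\big\{\|\lambda|\ms\|, \limsup_{n\to\infty} \|w^{(n)}|\ms\|\big\}.
\]
Here Lemma~\ref{predual-l3} is the key tool: for each $n$ there is a dyadic cube $Q_n=Q(\lambda+w^{(n)})$ realizing the norm, i.e.
\[
\|\lambda+w^{(n)}|\ms\| = |Q_n|^{\frac1u-\frac1p}\Big(\sum_{k:Q_{0,k}\subset Q_n}|\lambda_k+w^{(n)}_k|^p\Big)^{\frac1p}.
\]
I would pass to a subsequence along which $\|\lambda+w^{(n)}|\ms\|$ converges to the $\limsup$, and then split into two cases according to the behaviour of the optimal cubes $Q_n$. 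The dichotomy I expect is: either the cubes $Q_n$ stay within a bounded region of $\Zn$ (up to a further subsequence, they are eventually constant or their supports stay in a fixed large cube), or they escape to infinity (their ``location'' $|m_n|\to\infty$ and/or $|Q_n|\to\infty$ in a way that pushes them off the essential support of $\lambda$).

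In the \emph{bounded} case the cube $Q_n$ is one of finitely many dyadic cubes; since only finitely many coordinates are involved and $w^{(n)}_k\to0$ coordinatewise, the norm contribution from $w^{(n)}$ on $Q_n$ vanishes, so the $\limsup$ is bounded by $\|\lambda|\ms\|$. In the \emph{escaping} case, I would use that $\lambda\in\ms^{00}$ can be approximated in $\ms$ by a finite sequence $\lambda^\varepsilon$ supported in a fixed cube; for $Q_n$ far enough out, the restriction $\lambda|_{Q_n}$ has norm at most $\varepsilon$, so
\[
|Q_n|^{\frac1u-\frac1p}\Big(\sum_{k:Q_{0,k}\subset Q_n}|\lambda_k+w^{(n)}_k|^p\Big)^{\frac1p} \le \varepsilon\|\lambda|\ms\| + \|w^{(n)}|\ms\|
\]
by the quasi-triangle (or triangle, since $p\ge1$ is the relevant regime for the Pitt application, though one can use the $p$-triangle inequality in general) inequality, and letting $n\to\infty$ and then $\varepsilon\to0$ gives the bound by $\limsup_n\|w^{(n)}|\ms\|$. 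Combining the two cases yields the claimed upper bound. The main obstacle I anticipate is making the case distinction on the optimal cubes $Q_n$ rigorous and uniform — in particular, extracting the right subsequences so that ``bounded location'' versus ``escape to infinity'' is a genuine dichotomy, and controlling the interaction between the growing side length $|Q_n|$ and the decay of $\lambda$ near infinity. This is precisely where the finite-approximation property defining $\ms^{00}$ (and Lemma~\ref{predual-l3}, which fails on all of $\ms$) is essential.
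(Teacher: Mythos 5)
Your overall strategy coincides with the paper's: reduce to finitely supported $\lambda$ by density, use Lemma~\ref{predual-l3} to pick norm-realizing dyadic cubes for $\lambda+w^{(n)}$ (and for $w^{(n)}$), pass to a subsequence attaining the $\limsup$, use that weak convergence gives coordinatewise --- hence locally uniform --- convergence $w^{(n)}_k\to 0$, and then argue by cases on the behaviour of the cubes. The gap lies in your case distinction. ``Bounded location'' versus ``escaping off the essential support of $\lambda$'' is \emph{not} an exhaustive dichotomy: dyadic cubes with $|Q_n|\to\infty$ are not pushed off the support of $\lambda$; on the contrary, they eventually \emph{contain} it. In that case your key claim, that the restriction $\lambda|_{Q_n}$ has norm at most $\varepsilon$, is false: for large $n$ one has $\lambda|_{Q_n}=\lambda$, whose Morrey norm is exactly $\|\lambda|\ms\|$. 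The same defect affects your lower-bound sketch, where you assert that evaluating on ``far out'' cubes recovers $\limsup_n\|w^{(n)}|\ms\|$; the (near-)optimal cubes for $w^{(n)}$ may be huge cubes meeting the support of $\lambda$, and then disjointness of supports is unavailable.

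What saves this case --- and it is the analytic heart of the lemma, the place where $p<u$ enters decisively --- is not disjointness but the sign of the exponent: if the finite sequence $\lambda$ is supported in a dyadic cube $\widetilde{Q}$ and $\widetilde{Q}\subset Q_n$, then
\[
|Q_n|^{\frac1u-\frac1p}\Big(\sum_{k:\,Q_{0,k}\subset Q_n}|\lambda_k|^p\Big)^{\frac1p}
\le \Big(\frac{|Q_n|}{|\widetilde{Q}|}\Big)^{\frac1u-\frac1p}\,\|\lambda|\ms\|
\;\longrightarrow\; 0
\qquad\text{as}\quad |Q_n|\to\infty,
\]
since $\frac1u-\frac1p<0$. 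Accordingly, the paper works with a trichotomy for the subsequence of cubes: (1) all contained in one fixed cube (then the $w$-contribution vanishes by locally uniform convergence); (2) $|Q_{n_i}|\to\infty$ (then the $\lambda$-contribution vanishes by the display above); (3) sizes bounded and cubes pairwise disjoint (then they eventually miss the support of $\lambda$ --- this is your ``far out'' argument). Your proposal handles (1) and (3) but mishandles (2), and note that some use of $p<u$ beyond Lemma~\ref{predual-l3} is unavoidable: the max-formula is of $c_0$-type and fails in $m_{p,p}=\ell_p$ (take $\lambda=e^{(0)}$, $w^{(n)}=e^{(n)}$, where the left-hand side is $2^{1/p}$ and the right-hand side is $1$). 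Since you yourself flag the interaction between growing $|Q_n|$ and the decay of $\lambda$ as the unresolved obstacle, the proposal stops short precisely at the decisive step.
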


\begin{proof}  
\emph{Step 1.}
First we assume that the sequence $\lambda\in \ms^{00}$ is finite. The sequences    $\lambda + w^{(n)}$ and $w^{(n)}$ belong to $\ms^{00}$, therefore, according to Lemma \ref{predual-l3}, there exist  dyadic cubes $Q_n$ and $\widetilde{Q}_n$ such that 
\begin{align}\label{pitt-l1}
 \|\lambda + w^{(n)}|\ms\| & =  |Q_n|^{\frac{1}{u}-\frac{1}{p}}\left(\sum_{k:Q_{0,k}\subset Q_n} \!\!|\lambda_k+ w_k^{(n)}|^p\right)^{\frac{1}{p}} ,\\
 \label{pitt-l1a}
 \| w^{(n)}|\ms\| & =  |\widetilde{Q_n}|^{\frac{1}{u}-\frac{1}{p}}\left(\sum_{k:Q_{0,k}\subset \widetilde{Q}_n} \!\!| w_k^{(n)}|^p\right)^{\frac{1}{p}} .
\end{align}
By the definition of $\limsup$ there is always a subsequence of cubes $\{Q_{n_i}\}_i$ such that   
\begin{equation}\label{pitt-l1b}
\limsup_{n\to\infty} \|\lambda + w^{(n)}|\ms\| = \lim_{i\rightarrow \infty} |Q_{n_i}|^{\frac{1}{u}-\frac{1}{p}}\left(\sum_{k:Q_{0,k}\subset Q_{n_i}} \!\!|\lambda_k+ w_k^{(n_i)}|^p \right)^{\frac{1}{p}}. 
\end{equation}
The cubes $Q_n$ are  dyadic cubes of size at least one, therefore we may assume that the subsequence satisfies one of the following alternative conditions: 
\begin{align}\label{pitt-l-alt1}
(1) &\quad  \text{there exists a dyadic cube} \; Q\; \text{such that}\; Q_{n_i}\subset Q \; \text{for any}\; i , \\
(2) & \quad \lim_{i\rightarrow \infty} |Q_{n_i}|= \infty ,\label{pitt-l-alt2}\\ 
(3) & \quad \sup_i |Q_{n_i}|<\infty \quad \text{and} \quad  Q_{n_i}\cap Q_{n_j}=\emptyset \quad \text{if} \quad i\not= j .\label{pitt-l-alt3}
\end{align}  
A similar statement holds for the cubes $\widetilde{Q}_n$. 

Please note that the weak convergence of the sequence  $w^{(n)}$  to zero implies the uniform convergence to zero of the coordinates of  $w^{(n)}$   on any dyadic cube  $Q$.   In the next steps we shall denote by $\widetilde{Q}(\lambda)$ the dyadic cube that contains the support of $\lambda$. 

\emph{Substep 1.1}  We prove that   
\begin{equation}\label{pitt-l2}
\max\big\{\|\lambda|\ms\|, \limsup_{n\to\infty} \|w^{(n)}|\ms\| \big\} \le \limsup_{n\to\infty} \|\lambda + w^{(n)}|\ms\|\, .
\end{equation}
We have that 
\begin{align} \nonumber
\|\lambda|\ms\| & = 
|Q(\lambda)|^{\frac{1}{u}-\frac{1}{p}}\left(\sum_{k:Q_{0,k}\subset Q(\lambda)} \!\!|\lambda_k|^p\right)^{\frac{1}{p}} \\
& = 
\lim_{i\rightarrow \infty} |Q(\lambda)|^{\frac{1}{u}-\frac{1}{p}}\left(\sum_{k:Q_{0,k}\subset Q(\lambda)} \!\!|\lambda_k+ w_k^{(n_i)}|^p\right)^{\frac{1}{p}} \nonumber\\
&\le \lim_{i\rightarrow \infty} |Q_{n_i}|^{\frac{1}{u}-\frac{1}{p}}\left(\sum_{k:Q_{0,k}\subset Q_{n_i}} \!\!|\lambda_k+ w_k^{(n_i)}|^p\right)^{\frac{1}{p}} \nonumber\\
&= \limsup_{n\to\infty} \|\lambda + w^{(n)}|\ms\|,\nonumber
\end{align}
where we used \eqref{pitt-l1b}.
Let
\begin{equation}\label{pitt-l2a}
\limsup_{n \to\infty} \| w^{(n)}|\ms\| = \lim_{i\rightarrow \infty} |\widetilde{Q}_{n_i}|^{\frac{1}{u}-\frac{1}{p}}\left(\sum_{k:Q_{0,k}\subset \widetilde{Q}_{n_i}} |w_k^{(n_i)}|^p \right)^{\frac{1}{p}}. 
\end{equation}
If the sequence of cubes $\widetilde{Q}_{n_i}$ satisfies the condition \eqref{pitt-l-alt1}, then $\|w^{(n_i)}|\ms\|\rightarrow 0$. So the inequality \eqref{pitt-l2} holds. 

If the sequence of cubes $\widetilde{Q}_{n_i}$ satisfies the condition \eqref{pitt-l-alt2}, then
\begin{align*}
  |\widetilde{Q}_{n_i}|^{\frac{1}{u}-\frac{1}{p}}&\left(\sum_{k:Q_{0,k}\subset \widetilde{Q}_{n_i}} \!\!| w_k^{(n_i)}|^p \right)^{\frac{1}{p}}  \\
  &\le \  
|\widetilde{Q}_{n_i}|^{\frac{1}{u}-\frac{1}{p}}\left(\sum_{k:Q_{0,k}\subset \widetilde{Q}_{n_i}} \!\!|\lambda_k+ w_k^{(n_i)}|^p \right)^{\frac{1}{p}} + \left(\frac{|\widetilde{Q}_{n_i}|}{|\widetilde{Q}(\lambda)|}\right)^{\frac{1}{u}-\frac{1}{p}}
\|\lambda|\ms\|.
\end{align*} 
But $(|\widetilde{Q}_{n_i}|/|\widetilde{Q}(\lambda)|)^{\frac{1}{u}-\frac{1}{p}}\rightarrow 0$, so  the inequality \eqref{pitt-l2} holds also in this case.

If the sequence of cubes $\widetilde{Q}_{n_i}$ satisfies the condition \eqref{pitt-l-alt3}, then for sufficiently large $i$ we have $\lambda|_{\widetilde{Q}_{n_i}}=0$, and again the inequality \eqref{pitt-l2} holds.\\

\emph{Substep 1.2.} Now we prove the inequality converse to \eqref{pitt-l2}, i.e., 
\begin{equation}\label{pitt-l3} 
 \limsup_{n\to\infty} \|\lambda + w^{(n)}|\ms\| \le  \max\big\{\|\lambda|\ms\|,\limsup_{n\to\infty} \|w^{(n)}|\ms\| \big\}\, .
\end{equation}
We can proceed in a similar way as in the last step, now using the cubes $Q_{n_i}$. If the sequence of cubes $\widetilde{Q}_{n_i}$ satisfies the condition \eqref{pitt-l-alt1}, then 
\[
|{Q}_{n_i}|^{\frac{1}{u}-\frac{1}{p}}\left(\sum_{k:Q_{0,k}\subset {Q}_{n_i}} \!\!| w_k^{(n_i)}|^p \right)^{\frac{1}{p}} \rightarrow 0 .
\]
So
\begin{equation}\label{pitt-l3a}
 \limsup_{n\to\infty} \|\lambda + w^{(n)}|\ms\| \le \|\lambda|\ms\| .
\end{equation}  
If the sequence of cubes $\widetilde{Q}_{n_i}$ satisfies the condition \eqref{pitt-l-alt2}, then similarly  as above we conclude
\begin{align*}
  |{Q}_{n_i}|^{\frac{1}{u}-\frac{1}{p}}&\left(\sum_{k:Q_{0,k}\subset {Q}_{n_i}} \!\!| \lambda_k + w_k^{(n_i)}|^p \right)^{\frac{1}{p}} \\
  &\le 
|{Q}_{n_i}|^{\frac{1}{u}-\frac{1}{p}}\left(\sum_{k:Q_{0,k}\subset {Q}_{n_i}} \!\!| w_k^{(n_i)}|^p \right)^{\frac{1}{p}}  + \left(\frac{|{Q}_{n_i}|}{|\widetilde{Q}(\lambda)|}\right)^{\frac{1}{u}-\frac{1}{p}}
\|\lambda|\ms\|
\nonumber 
\end{align*} 
 and the last summand tends to zero when $i\rightarrow \infty$. 
 
 If the sequence of cubes $\widetilde{Q}_{n_i}$ satisfies the condition \eqref{pitt-l-alt3}, then once more the sequences $\lambda + w_k^{(n_i)}$ and $ w_k^{(n_i)}$ coincide for sufficiently large $i$.\\

\emph{Step 2.} The general case is true by the density of finitely supported sequences in $\ms^{00}$ since the norm is a Lipschitzian function.  
\end{proof}

Now we can finally establish Pitt's theorem in our context.

\begin{theorem}\label{pitt-ms}
Let $1<p<u<\infty$ and $1\le q<\infty$. Then any bounded linear operator $T$ from $\ms^{00}(\Z^d)$ into $\ell_q(\Z^d)$ is compact.
\end{theorem}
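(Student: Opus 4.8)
The plan is to argue by contradiction, exploiting the sharp contrast between the $c_0$-type behaviour of the domain, encoded in Lemma~\ref{pitt-lemma}, and the $\ell_q$-type behaviour ($1\le q<\infty$) of the target $\ell_q$. First I would record a structural fact: $\ms^{00}$ contains no isomorphic copy of $\ell_1$. Indeed, by Proposition~\ref{dual-p} its dual is $\xup$, which is separable by Proposition~\ref{predual-l2-2a}(iii); and a Banach space with separable dual cannot contain $\ell_1$, since otherwise the nonseparable $\ell_\infty$ would be a quotient of the separable $\xup$. By Rosenthal's $\ell_1$-theorem, every bounded sequence in $\ms^{00}$ therefore has a weakly Cauchy subsequence. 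Now suppose $T$ is bounded but not compact; after normalising we may assume $\|T\|\le 1$. Then $T$ of the unit ball is not totally bounded, so there are $\delta>0$ and a sequence $(x_n)$ in the unit ball of $\ms^{00}$ with $\|Tx_n-Tx_m|\ell_q\|\ge\delta$ for $n\neq m$. Passing to a weakly Cauchy subsequence and setting $w^{(n)}=x_{2n}-x_{2n-1}$, I obtain $w^{(n)}\rightharpoondown 0$, $\sup_n\|w^{(n)}|\ms\|\le 2=:M$, and $\|Tw^{(n)}|\ell_q\|\ge\delta$ for all $n$. Since $T$ is weak-to-weak continuous, $Tw^{(n)}\rightharpoondown 0$ in $\ell_q$. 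When $q=1$ this already gives a contradiction, because $\ell_1$ has the Schur property and $Tw^{(n)}\rightharpoondown 0$ then forces $\|Tw^{(n)}|\ell_1\|\to 0$; so from now on I may take $1<q<\infty$.

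The heart of the argument is to extract a \emph{single} subsequence along which the partial sums are simultaneously controlled in both spaces. On the target side, $y^{(n)}:=Tw^{(n)}$ is bounded and weakly null in $\ell_q$, hence coordinatewise null; a standard gliding-hump perturbation produces a subsequence and pairwise disjoint finite sets $E_k\subset\Zn$, disjoint from the previously used supports, such that up to a summable error $y^{(n_k)}$ is essentially supported in $E_k$ with $\|y^{(n_k)}|\ell_q\|\ge\delta/2$. Disjointness of the $E_k$ then yields the lower bound
\[
\Big\|\sum_{k=1}^N y^{(n_k)}\,\big|\,\ell_q\Big\|\ \ge\ \frac{\delta}{2}\,N^{1/q}-C'\ \longrightarrow\ \infty\qquad(N\to\infty).
\]

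On the domain side, I would build the \emph{same} subsequence inductively by repeated use of Lemma~\ref{pitt-lemma}. Given the finite sum $\Lambda_{k-1}=\sum_{i<k}w^{(n_i)}\in\ms^{00}$, the lemma gives $\limsup_{n}\|\Lambda_{k-1}+w^{(n)}|\ms\|=\max\{\|\Lambda_{k-1}|\ms\|,\limsup_n\|w^{(n)}|\ms\|\}\le\max\{\|\Lambda_{k-1}|\ms\|,M\}$, so for all sufficiently large $n$ one has $\|\Lambda_{k-1}+w^{(n)}|\ms\|\le\max\{\|\Lambda_{k-1}|\ms\|,M\}+\varepsilon 2^{-k}$. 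Choosing $n_k$ large enough to meet this bound \emph{and} the disjointification requirement above (both hold for all large $n$), an easy induction yields the uniform estimate $\sup_N\|\sum_{k=1}^N w^{(n_k)}|\ms\|\le M+\varepsilon=:C$. Combining the two displays with $\|T\|\le 1$ gives
\[
\frac{\delta}{2}\,N^{1/q}-C'\ \le\ \Big\|\sum_{k=1}^N y^{(n_k)}\,\big|\,\ell_q\Big\|=\Big\|T\Big(\sum_{k=1}^N w^{(n_k)}\Big)\,\big|\,\ell_q\Big\|\ \le\ C
\]
for every $N$, which is impossible as $N\to\infty$ because $q<\infty$. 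Hence $T$ must be compact.

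The main obstacle I anticipate is the bookkeeping of the single diagonal subsequence that must satisfy two competing demands at once: the summable-tail disjointification in $\ell_q$ and the iterated near-equality from Lemma~\ref{pitt-lemma} in $\ms$. The point making this feasible is that each requirement is of the form ``$n_k$ sufficiently large'', so both can be met at every inductive step; the analytic inputs (Rosenthal's theorem, the Schur property for $q=1$, the $\ell_q$ gliding hump, and above all Lemma~\ref{pitt-lemma}) are all at hand. It is also worth noting that, unlike the classical Pitt theorem, no relation between $q$ and $p$ is needed here, reflecting the fact that $\ms^{00}$ plays the role of the ``$p=\infty$'' endpoint $c_0$.
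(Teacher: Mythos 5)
Your proof is correct, and it takes a genuinely different route from the paper's. Both arguments share the same opening reduction — the dual of $\ms^{00}$ is the separable space $\xup$ (Propositions~\ref{predual-l2-2a} and \ref{dual-p}), so bounded sequences have weakly Cauchy subsequences and non-compactness of $T$ produces a weakly null sequence $w^{(n)}$ with $\|Tw^{(n)}|\ell_q\|\ge\delta$ — and both hinge on Lemma~\ref{pitt-lemma}. But the core mechanisms differ. The paper follows Delpech's ``short proof'' of Pitt's theorem: it picks an almost-norming vector $x_\varepsilon$, applies Lemma~\ref{pitt-lemma} \emph{once} together with the additive analogue $\limsup_n\|y+z_n|\ell_q\|^q=\|y|\ell_q\|^q+\limsup_n\|z_n|\ell_q\|^q$ for weakly null $z_n$ in $\ell_q$, and plays the $\max$ against the $q$-power sum via a scaling parameter $t$ to force $\limsup_n\|Tw^{(n)}|\ell_q\|\to 0$; this is a quantitative, asymptotic-smoothness style argument. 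You instead run the classical combinatorial scheme: a gliding hump in $\ell_q$ makes the images $Tw^{(n_k)}$ essentially disjointly supported, so the partial sums of the images grow like $N^{1/q}$, while \emph{iterated} applications of Lemma~\ref{pitt-lemma} (each demand being of the form ``$n_k$ sufficiently large'', hence compatible with the hump extraction) keep the partial sums $\sum_{k\le N}w^{(n_k)}$ uniformly bounded in $\ms$, exactly as for the summing basis of $c_0$; boundedness of $T$ then gives the contradiction. Your route is somewhat longer and invokes heavier background (Rosenthal's $\ell_1$-theorem, though a diagonal argument over a countable dense subset of $\xup$ would suffice, as the paper implicitly uses; the Schur property for $q=1$, which your gliding-hump bound actually covers anyway), but it makes the structural reason for the theorem transparent: $\ms^{00}$ behaves like $c_0$ at infinity, $\ell_q$ does not. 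The paper's argument is shorter and yields the weak-to-norm continuity of $T$ directly as a quantitative estimate.
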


\begin{proof}
Due to Proposition~\ref{predual-l2-2a}, Lemma \ref{predual-l3},  Proposition \ref{dual-p} and Lemma \ref{pitt-lemma} we can follow the arguments presented  in \cite{D}. We only sketch the proof for the convenience of the reader. 

The dual space to $\ms^{00}$ is separable, cf.  Proposition~\ref{predual-l2-2a}(ii) and  Proposition \ref{dual-p}, so every bounded sequence in $\ms^{00}$ has a weak Cauchy subsequence and $T$ is compact if it is weak-to-norm continuous.  

We may assume that $\|T\|=1$. Let $0<\varepsilon < 1$. We choose $x_\varepsilon\in \ms^{00}$ such that $\|x_\varepsilon|\ms\|=1$ and $1-\varepsilon\le \|T(x_\varepsilon)|\ell_q\|\le 1$. Let $w_n\rightharpoondown 0$ in $\ms^{00}$, and let 
$\|w^{(n)}|\ms\|\le M$. Lemma \ref{pitt-lemma} and the analogous statement for $\ell_q$, cf. \cite{D}, imply that
\begin{align*}\nonumber
  \|T(x_\varepsilon)|\ell_q\|^q + & t^q \limsup_{n\to\infty}\|T(w^{(n)})|\ell_q\|^q\ \\
  &=  \limsup_{n\to\infty} \|T(x_\varepsilon + w^{(n)})|\ell_q\|^q   \\
& \le   \limsup_{n\to\infty} \|x_\varepsilon + w^{(n)}|\ms\|^q \\
& =  \max \big(\|x_\varepsilon|\ms\|^q, t^q \limsup_{n\to\infty}\|(w^{(n)})|\ms\|^q \big),  \nonumber 
\end{align*}
where $t >0$. This leads to  
\[
\limsup_{n\to\infty}\|T(w^{(n)})|\ell_q\|^q \le t^{-q} \Big[\max\big(1,t^qM^q\big)-(1-\varepsilon)^q\Big] .
\]
The choice $0<\varepsilon\le \min (1, M^{-2q})$ and $t=\varepsilon^{\frac{1}{2q}}\ $ implies
\[
\limsup_{n\to\infty}\|T(w^{(n)})|\ell_q\|^q \le \varepsilon^{-1/2} \big(1-(1-\varepsilon)^q\big) .
\]
Taking the limit with $\varepsilon\rightarrow 0$ we have shown that  $\|T(w^{(n)})|\ell_q\|\rightarrow 0$. 
\end{proof}

\section{Finite dimensional Morrey sequence spaces}

Finally we shall briefly deal with finite dimensional sequence spaces related to $\ms$. We have at least two reasons for doing so: at first, in view of Theorem~\ref{cont}, there is never a compact embedding between two sequence spaces of Morrey type -- whereas any continuous embedding between finite-dimensional spaces is compact. Secondly, when dealing with smoothness Morrey spaces like $\MB$ or $\MF$, for instance, then wavelet decompositions usually lead to appropriate sequence spaces which should be studied in further detail. In this spirit it is quite natural and helpful to understand finite sequence spaces of Morrey type better than so far.

For the latter reason we do not consider finite Morrey sequence spaces as general as possible, but only a special `level' version of it.

\begin{definition}
Let $0<p\leq u<\infty$, $j\in\no$ be fixed and $\mathcal{K}_j = \{k: Q_{0,k}\subset Q_{-j,0}\}$ . We define
\begin{align}
\mmb = & \{ \lambda = \{\lambda_{k}\}_{k\in \mathcal{K}_j} \subset\C: \nonumber\\
&\quad  \|\lambda|\mmb\| = \sup_{Q_{-\nu,m}\subset Q_{-j,0}}\!\!
|Q_{-\nu,m}|^{\frac1u-\frac1p} \Big(\sum_{k:Q_{0,k}\subset Q_{-\nu,m}}\!\!|\lambda_k|^p\Big)^{\frac 1 p}<\infty\},
\end{align}
where the  supremum is taken over all $\nu\in\no$ and $m\in\Zn$ such that 
$Q_{-\nu,m}\subset Q_{-j,0}$.
\end{definition}

\begin{ownremark}
Similarly one can define  spaces related to any cube $Q_{-j,m}$, $m\in \Z^d$, but they are isometrically isomorphic to $\mmb$, so we restrict our attention to the last space. \\
Clearly, for $u=p$ this space coincides with the usual $2^{jd}$-dimensional space $\ell_p^{2^{jd}}$, that is, $m_{p,p}^{2^{jd}} = \ell_p^{2^{jd}}$. 
\end{ownremark}

\begin{lemma}\label{lemma15030}
Let $0< p_1\le u_1<\infty$,  $0< p_2\le u_2<\infty$, $j\in \no$ and $m_0\in\Zn$ be given. Then the norm of the compact identity operator  
\begin{equation}\label{id_j-m}
 \id_j: \mmbet\hookrightarrow \mmbzt
\end{equation}
satisfies
\begin{equation}\label{1503-0}
\|\id_j\| = 
\begin{cases}
1 &\qquad \text{if} \quad p_1\ge p_2\quad \text{and }\quad u_2\ge u_1,\quad  \\
 1 &\qquad \text{if} \quad p_1 < p_2\quad \text{and }\quad \frac{p_2}{u_2} \le \frac{p_1}{u_1},\\  
 2^{jd(\frac{1}{u_2}-\frac{1}{u_1})}&\qquad \text{if} \quad p_1\ge p_2\quad \text{and }\quad u_2 < u_1,\\
\end{cases}
\end{equation}
and in the remaining case, there is a constant $c$, $0<c\le 1$, independent of j such that 
\begin{equation}\label{1503-a}
c\, 2^{jd(\frac{1}{u_2}-\frac{p_1}{u_1p_2})} \le\|\id_j\| \leq 
2^{jd(\frac{1}{u_2}-\frac{p_1}{u_1p_2})}  \qquad \text{if} \quad  
p_1 < p_2\quad \text{and }\quad \frac{p_2}{u_2} > \frac{p_1}{u_1}\ .
\end{equation}
\end{lemma}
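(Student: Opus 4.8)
The plan is to treat the four regimes of \eqref{1503-0}--\eqref{1503-a} separately, noting first that the operator is automatically compact, both spaces being $2^{jd}$-dimensional, so that only the value of $\|\id_j\|$ is at stake. A universal lower bound $\|\id_j\|\ge1$ comes from testing on a single basis vector $e^{(k_0)}$, $k_0\in\mathcal K_j$: since $\frac1{u_i}-\frac1{p_i}\le0$ and every admissible cube has volume $\ge1$, the supremum defining each norm is attained on the unit cube $Q_{0,k_0}$, whence $\|e^{(k_0)}|\mmbet\|=\|e^{(k_0)}|\mmbzt\|=1$. The same test on $Q_{0,k}$ shows that the normalisation $\|\lambda|\mmbet\|=1$ forces $|\lambda_k|\le1$ for every $k$, a fact I shall need later.

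The first two lines of \eqref{1503-0} are precisely the two subcases ($p_1\ge p_2$, resp.\ $p_1<p_2$) in which the continuity conditions \eqref{ms3acont} hold: when $p_1\ge p_2$ they reduce to $u_1\le u_2$, and when $p_1<p_2$ the inequality $\frac{p_2}{u_2}\le\frac{p_1}{u_1}$ already forces $u_1<u_2$. In both subcases the argument of Step~1 of the proof of Theorem~\ref{cont}, which establishes \eqref{ms1cont} with embedding constant $1$, is carried out cube by cube; restricting every supremum to the dyadic subcubes $Q_{-\nu,m}\subset Q_{-j,0}$ therefore gives $\|\lambda|\mmbzt\|\le\|\lambda|\mmbet\|$, i.e.\ $\|\id_j\|\le1$. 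Together with the universal lower bound this yields $\|\id_j\|=1$.

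For the third line ($p_1\ge p_2$, $u_2<u_1$) I would factor the passage $\mmbet\to\mmbzt$ into a change of the fine index at fixed $u_1$ followed by a change of $u$ at fixed $p_2$. By Hölder (the inequality \eqref{ms-1}) the first step costs nothing, while in the second the only change per cube is the factor $|Q_{-\nu,m}|^{\frac1{u_2}-\frac1{u_1}}$, which is nonnegative and bounded by $2^{jd(\frac1{u_2}-\frac1{u_1})}$ because every admissible cube satisfies $|Q_{-\nu,m}|\le2^{jd}$; this gives the upper bound. The matching lower bound follows by testing on the constant sequence $\lambda_k\equiv1$: in each norm the supremum is attained on the full cube $Q_{-j,0}$ and equals $2^{jd/u_i}$, so the quotient of norms is exactly $2^{jd(\frac1{u_2}-\frac1{u_1})}$, and $\|\id_j\|=2^{jd(\frac1{u_2}-\frac1{u_1})}$.

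Finally, in the remaining case ($p_1<p_2$, $\frac{p_2}{u_2}>\frac{p_1}{u_1}$) the exponent $\frac1{u_2}-\frac{p_1}{u_1p_2}$ is strictly positive. For the upper bound I normalise $\|\lambda|\mmbet\|=1$, so that $|\lambda_k|\le1$ and hence $|\lambda_k|^{p_2}\le|\lambda_k|^{p_1}$; then for each $Q_{-\nu,m}\subset Q_{-j,0}$ the constraint $\sum|\lambda_k|^{p_1}\le2^{\nu d(1-\frac{p_1}{u_1})}$ yields $|Q_{-\nu,m}|^{\frac1{u_2}-\frac1{p_2}}(\sum|\lambda_k|^{p_2})^{1/p_2}\le2^{\nu d(\frac1{u_2}-\frac{p_1}{u_1p_2})}$, whose supremum over $\nu\le j$ is $2^{jd(\frac1{u_2}-\frac{p_1}{u_1p_2})}$ by positivity of the exponent. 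The lower bound is the delicate point and the main obstacle: I would re-use the extremal sequence $\lambda^{(j)}$ from Substep~2.2 of the proof of Theorem~\ref{cont}, with $k_j=\whole{2^{jd(1-\frac{p_1}{u_1})}}$ entries equal to $1$ distributed so that no subcube $Q_{-\nu,m}$ carries more than $\sim k_\nu$ of them---a placement possible exactly because of the scale-consistency estimate \eqref{30.12-1}. Then \eqref{30.12-3} gives $\|\lambda^{(j)}|\mmbet\|\le1$, while testing the target norm on $Q_{-j,0}$ gives $\|\lambda^{(j)}|\mmbzt\|\ge2^{jd(\frac1{u_2}-\frac1{p_2})}k_j^{1/p_2}\sim2^{jd(\frac1{u_2}-\frac{p_1}{u_1p_2})}$; the implicit constant from the floor function (and from \eqref{30.12-1}) is exactly the $c\le1$ of \eqref{1503-a}. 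The hard part throughout is this combinatorial placement and the simultaneous verification that it keeps the source norm bounded yet makes the target norm large, the other three regimes reducing to the per-cube estimates already supplied by Theorem~\ref{cont}.
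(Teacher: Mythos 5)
Your proof is correct and follows essentially the same route as the paper's: per-cube H\"older/normalisation estimates (constant $1$) for the first two lines, the single basis vector and the constant sequence as extremal test sequences, the factor $|Q_{-\nu,m}|^{\frac{1}{u_2}-\frac{1}{u_1}}\le 2^{jd(\frac{1}{u_2}-\frac{1}{u_1})}$ for the third line, and the combinatorial sequence $\lambda^{(j)}$ from Substep~2.2 of Theorem~\ref{cont} for the lower bound in the remaining case. Your write-up is in fact somewhat more explicit than the paper's (whose final display contains small typos in the exponents), but the ideas coincide throughout.
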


\begin{proof}
In case of $p_1\ge p_2$  the upper estimate for $\|\id_j\|$ follows from H\"older's inequality and the corresponding relations between $u_1$ and $u_2$. The lower estimate in case of $u_1\le u_2$ follows by applying the sequence $\lambda=\{\lambda_k\}_k$ with   $\lambda_{0}=1$ and $\lambda_{k}=0$ if $k\neq 0$ . Otherwise, if $u_1> u_2$, we can use the sequence $\lambda=\{\lambda_k\}_k$ with   $\lambda_k\equiv 1$ for any $k$. \\

Let now $p_1<p_2$ and $\frac{p_2}{u_2} \le \frac{p_1}{u_1}$.  If $\|\lambda|\mmbet\|=1$, then 
\[\sum_{k:Q_{0,k}\subset Q_{-\nu,m}}\!\!\! |\lambda_k|^{p_2} \le  \sum_{k:Q_{0,k}\subset Q_{-\nu,m}}\!\!\! |\lambda_k|^{p_1} ,\]
since $|\lambda_k|\le 1$. So for any $\nu$ with $0\le \nu\le j$ we have 
\begin{align*}
  2^{\nu d(\frac{p_2}{u_2}-1)} \sum_{k:Q_{0,k}\subset Q_{-\nu,m }} \!\!\! |\lambda_k|^{p_2} & \le  \;2^{\nu d(\frac{p_2}{u_2}-\frac{p_1}{u_1})}2^{\nu d(\frac{p_1}{u_1}-1)}  \sum_{k:Q_{0,k}\subset Q_{-\nu,m }}\!\!\! |\lambda_k|^{p_1} \\
  &\le  \ 2^{\nu d(\frac{p_2}{u_2}-\frac{p_1}{u_1})} \le 1.
\end{align*}
This proves that $\|\id_j\|\le 1$. The opposite inequality can be proved in the same way as in the first case. 

If   $p_1<p_2$,   $\frac{p_2}{u_2} > \frac{p_1}{u_1}$ and $\|\lambda|\mmbet\|=1$, then analogously as above we can prove that 
\begin{equation*}
2^{\nu d(\frac{p_2}{u_2}-1)} \sum_{k:Q_{0,k}\subset Q_{-\nu,m }} |\lambda_k|^{p_2} \le \ 2^{\nu d(\frac{p_2}{u_2}-\frac{p_1}{u_1})}. 
\end{equation*}
So  
\begin{equation*}
\|\lambda|\mmbzt\| \le 2^{dj(\frac{1}{u_2}-\frac{p_1}{u_1p_2})}.
\end{equation*} 
So  we obtain, 
\begin{equation*}
\|\lambda|\mmbzt\| \le 2^{dj(\frac{1}{u_2}-\frac{p_1}{u_1p_2})}. 
\end{equation*} 
To prove the opposite inequality we can use the same argument as in Substep 2.2. of the proof of Theorem \ref{cont}. We have
\begin{align} \nonumber
2^{dj(\frac{1}{u_2}-\frac{p_1}{u_1p_2})} & \le 2^{dj(\frac{1}{u_2}-\frac{1}{p_2})} 2^{dj(1-\frac{p_1}{u_1})\frac{1}{u_2}} \le c^{-1} 2^{dj(\frac{1}{u_2}-\frac{1}{p_2})} k_j \\ & = c^{-1}\|\lambda^{(j)}|\mmbzt\|\le \|\id_j\| \|\lambda^{(j)}|\mmbet\| \le c^{-1} \|\id_j\|, \nonumber
\end{align}
cf. \eqref{30.12-1}-\eqref{30.12-3}. 
\end{proof}

\begin{ownremark}
We suppose that \eqref{1503-a} is in fact an equality as well, but have no proof yet. In that case \eqref{1503-0} and \eqref{1503-a} could be summarised as
\[
\|\id_j\| \sim  2^{jd\left(\frac{1}{u_2}-\frac{1}{u_1} \min(1,\frac{p_1}{p_2})\right)_+}.
\]
\end{ownremark}

Finally we want to characterise the compactness of the embedding $\id_j$ given by \eqref{id_j-m} in some further detail. We restrict ourselves to the study of entropy numbers here, also for later use. Thus let us briefly recall the concept.

\begin{definition}\label{defi-ak}
Let $\ A_1\ $ and $\ A_2\ $ be two complex (quasi-) Banach spaces, 
$k\in\nat\ $ and let $\ T:A_1\rightarrow A_2\ $ be a linear and 
continuous operator from $\ A_1 $ into $\ A_2$. 
The {\em k\,th (dyadic) entropy number} $\ e_k\ $ of $\ T\ $ is the
infimum of all numbers $\ \varepsilon>0\ $ such that there exist $\ 2^{k-1}\ $ balls
in $\ A_2\ $ of radius $\ \varepsilon\ $ which cover the image $\ T\,U_1\ $ of the unit ball $\ U_1=\{a\in A_1 \;:\;\|a|A_1\|\leq 1\}$.
\end{definition}

For details and properties of entropy numbers we refer to \cite{CS,EE,Koe,Pie-s} (restricted to the case of Banach spaces), and \cite{ET} for some extensions to quasi-Banach spaces. Among other 
features we only want to mention 
the multiplicativity of entropy numbers: let $\ A_1$, $ A_2\ $ and $\ A_3$
be complex (quasi-) Banach spaces and $\ T_1 : A_1\longrightarrow A_2$, $ T_2 :
A_2\longrightarrow A_3\ $ two operators in the sense of 
Definition~\ref{defi-ak}. Then
\beq
e_{k_1+k_2-1} (T_2\circ T_1) \leq e_{k_1}(T_1)\,
e_{k_2} (T_2),\quad k_1, k_2\in\nat,
\label{e-multi}
\eeq

Note that $\ds\lim_{k\rightarrow\infty} e_k(T)= 0$ {if, and only if,} $T$ {is compact}, which explains the saying that entropy numbers measure
``how compact'' an operator acts.


One of the main tools in our arguments 
will be the characterisation of the asymptotic 
behaviour of 
the entropy numbers of the embedding $\ell_{p_1}^N 
\hookrightarrow \ell_{p_2}^N$. We recall it for convenience. For all $n \in \N$ we have in case of $0<p_1\leq p_2\leq\infty$ that 
\begin{equation}\label{Schuett}
e_k \Big(\id: \, \ell_{p_1}^N \hra \ell_{p_2}^N\Big) \sim
\left\{ \begin{array}{lll}
1 &\qquad & \mbox{if}\quad 1 \le k \le \log 2N\, , \\
\Big( \frac{\log (1+\frac{N}{k})}{k}\Big)^{\frac{1}{p_1}-\frac{1}{p_2}} 
&\qquad & \mbox{if}\quad \log 2N \le k \le 2N\, , \\
2^{-\frac{k}{2N}} \, N^{\frac{1}{p_2}-\frac{1}{p_1}}
&\qquad & \mbox{if}\quad 2N \le k\, , 
\end{array}\right.
\end{equation}
and in case $0<p_2<p_1\le \infty $ it holds 
\begin{equation}\label{Pietsch}  
e_k \Big(\id: \, \ell_{p_1}^N \hra \ell_{p_2}^N\Big) \sim 2^{-\frac{k}{2N}} \, N^{\frac{1}{p_2}-\frac{1}{p_1}} \qquad \text{for all}\quad k\in \N. 
\end{equation}
In the case $1 \le p_1,p_2 \le \infty$ this has been proved by
Sch\"utt \cite{Sch}. For $p_1<1$ and/or $p_2<1$  
we refer to Edmunds and Triebel \cite{ET} and
Triebel \cite[7.2,\, 7.3]{TrFS} (with a little supplement in \cite{Kue2}).


\begin{corollary}\label{lemma1503}
Let $j\in\nat$, $0<p_i\leq u_i<\infty$, $i=1,2$, and $k\in \no$ with $k\gtrsim 2^{jd}$. 
Then
\begin{equation}\label{1503-1}
e_k(\id_j:\mmbet \rightarrow \mmbzt) \sim  2^{-k2^{-jd}} \ 2^{jd\left(\frac{1}{u_2}-\frac{1}{u_1}\right)}.
\end{equation}
\end{corollary}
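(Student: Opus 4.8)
The plan is to trap the finite-dimensional Morrey spaces between rescaled $\ell_r^N$-spaces (with $N:=2^{jd}$) and to read off the entropy behaviour from the known asymptotics \eqref{Schuett}, \eqref{Pietsch} for $\id:\ell_{r_1}^N\hra\ell_{r_2}^N$, combined with the monotonicity of entropy numbers under pre- and post-composition with bounded maps. The cornerstone is a quadruple of norm estimates for $\mmb$, each obtained by testing the definition on a single type of cube. Evaluating on the full cube $Q_{-j,0}$ gives $\|\lambda|\ell_p^N\|\le N^{1/p-1/u}\|\lambda|\mmb\|$, i.e. $\mmb\hra\ell_p^N$ with norm $N^{1/p-1/u}$, while H\"older's inequality applied on every dyadic subcube gives $\ell_u^N\hra\mmb$ with norm $1$. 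Thus the unit ball of $\mmb$ lies between the unit ball of $\ell_u^N$ and the $\ell_p^N$-ball of radius $N^{1/p-1/u}$, two bodies of the same volume radius $N^{-1/u}$; this is the heuristic reason why, for large $k$, $\id_j$ behaves like $\id:\ell_{u_1}^N\hra\ell_{u_2}^N$ and the exponents $p_1,p_2$ disappear.

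The structural fact I use is that, in the regime $k\gtrsim N$, the exponential factor in \eqref{Schuett} (third line) and in \eqref{Pietsch} depends only on $k$ and $N$, not on the summation exponents; write it as $E_k$. For the upper bound I factorise $\id_j$ as
\[
\mmbet \xrightarrow{N^{1/p_1-1/u_1}} \ell_{p_1}^N \xrightarrow{\id} \ell_{u_2}^N \xrightarrow{\;1\;} \mmbzt,
\]
the outer arrows being the full-cube estimate (source) and the H\"older embedding (target). Since $e_k(\id:\ell_{p_1}^N\hra\ell_{u_2}^N)\sim E_k\,N^{1/u_2-1/p_1}$, monotonicity yields $e_k(\id_j)\lesssim N^{1/p_1-1/u_1}\,E_k\,N^{1/u_2-1/p_1}=E_k\,N^{1/u_2-1/u_1}$, the powers of $N$ carrying $p_1$ cancelling by the choice of intermediate exponent.

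For the matching lower bound I factorise $\id:\ell_{u_1}^N\hra\ell_{p_2}^N$ through $\id_j$,
\[
\ell_{u_1}^N \xrightarrow{\;1\;} \mmbet \xrightarrow{\id_j} \mmbzt \xrightarrow{N^{1/p_2-1/u_2}} \ell_{p_2}^N,
\]
now using $\ell_{u_1}^N\hra\mmbet$ (H\"older) and $\mmbzt\hra\ell_{p_2}^N$ (full cube). Monotonicity gives $E_k\,N^{1/p_2-1/u_1}\sim e_k(\id:\ell_{u_1}^N\hra\ell_{p_2}^N)\le e_k(\id_j)\,N^{1/p_2-1/u_2}$, whence $e_k(\id_j)\gtrsim E_k\,N^{1/u_2-1/u_1}$. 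Together with the previous step this establishes $e_k(\id_j)\sim E_k\,N^{1/u_2-1/u_1}$, i.e. \eqref{1503-1}, the exponential factor $E_k$ being exactly the one furnished by \eqref{Schuett}/\eqref{Pietsch} with $N=2^{jd}$.

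Two points deserve attention in carrying this out. First, the hypothesis $k\gtrsim 2^{jd}=N$ guarantees that only the large-$k$ regime of \eqref{Schuett}/\eqref{Pietsch} enters; for $N\lesssim k\lesssim 2N$ the middle line of \eqref{Schuett} is comparable to $E_k\,N^{1/u_2-1/u_1}$, so no gap appears at the junction, and both intermediate maps fall under a single formula regardless of whether source and target exponents are ordered one way or the other. Second, all four side-embeddings hold for arbitrary admissible exponents with no case distinction, precisely because the monotonicities $p_i\le u_i$ are built into the spaces; this is what lets one factorisation work uniformly, in contrast to the case-split forced on $\|\id_j\|$ in Lemma~\ref{lemma15030}. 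The only non-mechanical ingredient is the selection of the intermediate exponents — $p_1$ and $u_2$ for the upper estimate, $u_1$ and $p_2$ for the lower — chosen exactly so that, after multiplying the three factors, every power of $N$ involving $p_1,p_2$ cancels and leaves $N^{1/u_2-1/u_1}$; I expect no obstacle beyond this bookkeeping and checking the admissibility ranges of the $\ell_r$-embeddings.
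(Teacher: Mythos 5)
Your proposal is correct and is essentially the paper's own proof: the same two factorisations $\mmbet \to \ell_{p_1}^{2^{jd}} \to \ell_{u_2}^{2^{jd}} \to \mmbzt$ (for the upper bound) and $\ell_{u_1}^{2^{jd}} \to \mmbet \to \mmbzt \to \ell_{p_2}^{2^{jd}}$ (for the lower bound), combined with multiplicativity of entropy numbers and \eqref{Schuett}--\eqref{Pietsch}. The only cosmetic difference is that you verify the four side-embedding norms directly via the full-cube evaluation and H\"older's inequality, whereas the paper cites Lemma~\ref{lemma15030}, which encodes exactly those computations.
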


\begin{ownremark}
It will be obvious from the proof below that the assumption for $k$ to be sufficiently large, $k\gtrsim 2^{jd}$, is not needed in all cases. But for simplicity we have stated the result above in that setting only.
\end{ownremark}

\begin{proof} 
The estimate from above follows from the following commutative diagram and the multiplicativity of entropy numbers,
\[ 
\begin{CD}
\mmbet@>\id_j>> \mmbzt\\
@V{\id_1}VV @AA{\id_2}A\\
\ell^{2^{jd}}_{r_1} @>\id_{\ell,j}>> \ell^{2^{jd}}_{r_2}\, ,
\end{CD}
 \]
leads to
\begin{equation*}
e_k(\id_j) \leq \left\|\id_1:\mmbet\to\ell^{2^{jd}}_{r_1}\right\|\ \left\|\id_2: 
\ell^{2^{jd}}_{r_2}\to \mmbzt\right\| e_k\left(\id_{\ell,j}: \ell^{2^{jd}}_{r_1} \to
 \ell^{2^{jd}}_{r_2}\right).
\end{equation*}
We choose $r_1=p_1$ and $r_2=u_2$, apply Lemma~\ref{lemma15030} and arrive at
\[
e_k(\id_j) \leq 2^{jd\left(\frac{1}{p_1}-\frac{1}{u_1}\right)} \ e_k\left(\id_{\ell,j}: \ell^{2^{jd}}_{p_1} \to
 \ell^{2^{jd}}_{u_2}\right).
\]
Together with \eqref{Schuett} and \eqref{Pietsch} this leads to the upper estimate in \eqref{1503-1}, where only in case of $p_1\leq u_2$ the additional assumption $k\gtrsim 2^{jd}$ is needed.

Conversely, for the lower estimate we `reverse' the above diagram, that is, we consider 
\[ 
\begin{CD}
\mmbet@>\id_j>> \mmbzt\\
@A{\id_1}AA @VV{\id_2}V\\
\ell^{2^{jd}}_{r_1} @>\id_{\ell,j}>> \ell^{2^{jd}}_{r_2}\, .
\end{CD}
 \]
Thus we arrive at 
\begin{equation*}
e_k\left(\id_{\ell,j}: \ell^{2^{jd}}_{r_1} \to
 \ell^{2^{jd}}_{r_2}\right) 
\leq \left\|\id_1:\ell^{2^{jd}}_{r_1}\to\mmbet\right\|\ \left\|\id_2: 
\mmbzt\to\ell^{2^{jd}}_{r_2} \right\| e_k(\id_j) .
\end{equation*}
This time we choose $r_1=u_1$ and $r_2=p_2$, apply Lemma~\ref{lemma15030} again and obtain
\[
e_k\left(\id_{\ell,j}: \ell^{2^{jd}}_{u_1} \to
 \ell^{2^{jd}}_{p_2}\right)
 \leq 2^{jd\left(\frac{1}{p_2}-\frac{1}{u_2}\right)} \ e_k(\id_j).
\]
Together with \eqref{Schuett} and \eqref{Pietsch} this completes the argument of the lower estimate in \eqref{1503-1}, where now only in case of $u_1\leq p_2$ the additional assumption $k\gtrsim 2^{jd}$ is needed.
\end{proof}

\begin{ownremark}
It was not our aim here to study $e_k(\id_j)$ in all cases, though for several applications also the results (and constants) for small $k\in\nat$ are very useful. Moreover, there are further quantities which characterise compactness of operators which admit a lot of further interesting applications. At the moment we concentrated on the new concept of Morrey sequence spaces as introduced in this paper (with some forerunner in \cite{GuKiSch}) and found in this last section, that for sufficiently large $k\in\nat$, $k\gtrsim 2^{jd}$,
\[
e_k\left(\id_j:\mmbet \rightarrow \mmbzt\right) \sim e_k\left(\id: \ell_{u_1}^{2^{jd}} \to \ell_{u_2}^{2^{jd}}\right),
\]
though the corresponding sequence spaces are quite different.
\end{ownremark}

\end{document}